\newcommand{\CC}{{\mathbb C}}
\newcommand{\ZZ}{{\mathbb Z}}
\newcommand{\RR}{{\mathbb R}}
\newcommand{\defeq}{\stackrel{\rm{def}}{=}}
\newcommand{\NN}{{\mathbb N}}
\newcommand{\rest}{\!\!\restriction}
\renewcommand{\Re}{\mathop{\rm Re}\nolimits}
\renewcommand{\Im}{\mathop{\rm Im}\nolimits}
\theoremstyle{plain}
\newtheorem{thm}{Theorem}
\newtheorem{prop}{Proposition}[section]
\newtheorem{lem}[prop]{Lemma}
\theoremstyle{definition}
\numberwithin{equation}{section}
\def\squarebox#1{\hbox to #1{\hfill\vbox to #1{\vfill}}} 
\newcommand{\stopthm}{\hfill\hfill\vbox{\hrule\hbox{\vrule\squarebox 
                 {.667em}\vrule}\hrule}\smallskip} 
\newcommand{\sech}{\textnormal{sech}}
\newcommand{\indentalign}{\hspace{0.3in}&\hspace{-0.3in}}
\newcommand{\la}{\langle}
\newcommand{\ra}{\rangle}
\title
[Soliton interaction with slowly varying potentials]
{Soliton interaction with slowly varying potentials}
\author[J. Holmer]
{Justin Holmer}
\email{holmer@math.berkeley.edu}
\author[M. Zworski]
{Maciej Zworski}
\email{zworski@math.berkeley.edu}
\address{Mathematics Department, University of California \\
Evans Hall, Berkeley, CA 94720, USA}
\begin{document}

\begin{abstract}
We study the Gross-Pitaevskii equation with a slowly varying
smooth potential, $ V ( x ) = W ( h x ) $. 
We show that up to time $  \log(1/h)/h $ and errors of
size $ h^2 $ in $ H^1 $, the 
solution is a soliton evolving 
according to the classical dynamics of a natural 
effective Hamiltonian, $ (\xi^2 + \sech^2 * V ( x ) )/2 $. This
provides an improvement ($ h \rightarrow h^2 $) compared to 
previous works, and is strikingly confirmed by numerical simulations 
-- see Fig.\ref{f:drama}. 
\end{abstract}

\maketitle

\vspace{-0.35in}
\section{Introduction}   
\label{in}

The Gross-Pitaevskii equation is the nonlinear Schr\"odinger
equation with an external potential:
\begin{equation}
\label{eq:nls}
\left\{
\begin{aligned}
&i\partial_t u + \tfrac{1}{2}\partial_x^2 u - V ( x ) u +u|u|^2 = 0\\
&u(x,0) = e^{ i v_0 x } \sech ( x - a_0 ) \,,  
\end{aligned}
\right.
\end{equation}
In \cite{HZ1} we investigated the case
of $ V ( x )  = h^2 W ( x ) $ where $ 0 < h \ll 1 $ and 
$ W \in H^{-1} ( \RR; \RR) $.
Because of our earlier work on high velocity scattering 
\cite{HMZ1},\cite{HMZ2}, which is potential specific, 
the paper presented the case of $ W ( x ) = 
\delta_0 ( x ) $, but as was pointed out there the method applies verbatim 
to the more general case, $W \in H^{-1}$. 
Motivated by the approach of \cite{HZ1} 
we now revisit the case of slowly varying potentials, $ V = 
W ( h x ) $, and show that using the effective Hamiltonian 
approach we can describe the evolution of the soliton 
with errors of size $ h^2 $.
In particular, in this setting, we improve
the results of Fr\"ohlich-Gustafson-Jonsson-Sigal 
\cite{FrSi}. 

\begin{thm}
\label{t:1}
Suppose that in \eqref{eq:nls},
$ V ( x ) = W ( hx) $, where $ W \in {\mathcal C}^3 ( \RR; \RR) $.
Let $\delta \in ( 0 , 1/2 ) $.
Then on the time interval 
$$0\leq t \leq  
\frac{ \delta  \log ( 1/ h ) } { C h } \,, $$
we have 
\begin{equation}
\label{eq:t1}
\| u ( t ,\bullet ) -  e^{i\bullet v(t) }e^{i\gamma ( t ) } \sech
(\bullet -a ( t) ) \|_{H^1 ( \RR ) } \leq C h^{2- \delta} \,, \ \ 
0 < h < h_0 \,,
\end{equation}
 where 
$ a$, $ v$, $ \gamma$,
solve the following system
of equations
\begin{gather}
\label{eq:t3}
\begin{gathered}
\dot {  a} =   v \,, \ \ 
\dot{  v} = - \frac 12 \sech^2 * V'  (  a) 
\,, 
\\ 
\dot {  \gamma} = \frac 1 2  + \frac {v^2} 2 
-  \sech^2 * V (   a ) +  ( x \, \sech^2 x \tanh x ) * V  (  a) \,, 
\end{gathered}
\end{gather}
with initial data $(a_0,v_0,0)$. 
The constants $C$ and $h_0$ depend  on 
$\|W^{(k)}\|_{L^\infty}$, $0\leq k\leq 3$, and $|v_0|$ only.
\end{thm}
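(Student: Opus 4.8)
The plan is to use the modulation-equations / Lyapunov-functional method, adapted so that the extra orders in $h$ are extracted carefully. First I would rescale and set up the ansatz $u(x,t) = e^{i x v(t)} e^{i\gamma(t)} \left( \sech(x - a(t)) + w(x,t) \right)$, where $(a,v,\gamma)$ solve \eqref{eq:t3} and $w$ is the error, and derive the PDE satisfied by $w$. Because $\sech$ is the ground-state soliton of the $V\equiv 0$ problem, the linear part of the $w$-equation is governed by the linearized NLS operator $\mathcal{L}$ around the soliton (in the moving frame), whose generalized null space is spanned by the symmetries (translation, phase, Galilean boost, scaling); the inhomogeneous forcing is $F = -(V(x) - V(a) - V'(a)(x-a))\sech(x-a)$ plus the modulation corrections coming from $\dot a, \dot v, \dot\gamma$. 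The point of choosing the effective Hamiltonian $(\xi^2 + \sech^2 * V(x))/2$ and the precise $\dot\gamma$ law in \eqref{eq:t3} is exactly to cancel the $O(h)$ and the non-oscillatory $O(h^2)$ parts of $F$ against the secular directions, so that the residual forcing is genuinely $O(h^2)$ and moreover \emph{symplectically orthogonal} to the null space of $\mathcal{L}$.

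Next I would impose the standard orthogonality (symplectic/gauge) conditions on $w$ — that $w(\cdot,t)$ be orthogonal to the four generalized null vectors of $\mathcal{L}$ — which via the implicit function theorem defines the modulation parameters and produces the ODE system; comparing with \eqref{eq:t3} shows the chosen dynamics are consistent to the required order, with discrepancy $O(h^2 \|w\| + \|w\|^2)$. Then the heart of the argument: a coercivity estimate for $\mathcal{L}$ on the orthogonal complement of its null space (this is the classical Weinstein-type spectral gap for the $\sech$ soliton, available since the nonlinearity is $L^2$-subcritical) gives control of $\|w\|_{H^1}$ via an almost-conserved energy-mass-momentum functional $\mathcal{E}[w]$ with $\mathcal{E}[w] \sim \|w\|_{H^1}^2$. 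Differentiating $\mathcal{E}[w]$ in time, the null-direction forcing drops out by orthogonality, the good part of $\mathcal{L}$ is sign-definite, and one is left with $\frac{d}{dt}\mathcal{E}[w] \lesssim h^2 \|w\|_{H^1} + h \|w\|_{H^1}^2 + \|w\|_{H^1}^3$ — here the $h\|w\|^2$ term comes from the slowly-varying potential differentiated once (a factor $V' = O(h)$) and is what allows the long time scale.

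From the differential inequality, writing $y(t) = \|w(t)\|_{H^1}$, one gets roughly $\dot y \lesssim h^2 + h y$, hence $y(t) \lesssim h (e^{Cht} - 1) \lesssim h^{2-\delta}$ as long as $Cht \le \delta \log(1/h)$, which is precisely the asserted time interval; a continuity/bootstrap argument closes this rigorously provided $h < h_0$. I expect the main obstacle to be bookkeeping the $O(h^2)$ terms honestly: one must Taylor-expand $V(x) = W(hx)$ around $x = a$ to second order, show that the zeroth- and first-order terms are absorbed exactly into the effective-Hamiltonian flow \eqref{eq:t3} (this is why $\sech^2 * V$ and the $x\sech^2 x \tanh x$ convolution appear — they are the soliton-averaged potential and its first variation), and control the second-order Taylor remainder, which carries a factor $h^2 (x-a)^2$ against $\sech(x-a)$ and is therefore $O(h^2)$ in the soliton-weighted norms; simultaneously one must verify that $C^3$ regularity of $W$ is exactly enough to make all these remainder terms and the modulation ODE coefficients well-defined and bounded by $\|W^{(k)}\|_{L^\infty}$, $0 \le k \le 3$. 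The Galilean and scaling near-null directions also require care because $v(t)$ and the soliton scale drift over the long time interval, so the coercivity constant must be shown to be uniform along the flow.
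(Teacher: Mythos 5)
Your setup (symplectically orthogonal decomposition via the implicit function theorem, the observation that the effective-Hamiltonian choice of the modulation ODEs cancels the zeroth- and first-order Taylor terms of the forcing, Weinstein coercivity of $\mathcal L$ on the orthogonal complement, and a bootstrap) matches the paper's. But the quantitative core has a genuine gap: your energy inequality $\tfrac{d}{dt}\mathcal E[w]\lesssim h^2\|w\|_{H^1}+h\|w\|_{H^1}^2+\|w\|_{H^1}^3$, i.e.\ $\dot y\lesssim h^2+hy$, integrates to $y(t)\lesssim h\,(e^{Cht}-1)$, which on the time interval $t\le \delta\log(1/h)/(Ch)$ gives only $y\lesssim h^{1-\delta}$ --- this is exactly the Fr\"ohlich--Gustafson--Jonsson--Sigal bound \eqref{eq:tslow}, not the claimed $h^{2-\delta}$; your final step ``$h(e^{Cht}-1)\lesssim h^{2-\delta}$'' is an arithmetic slip. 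Symplectic orthogonality of the residual $O(h^2)$ forcing to the null directions does not remove it from the energy estimate: the pairing $\langle \mathcal L w, F\rangle$ still contributes $h^2\|w\|_{H^1}$ per unit time, which accumulates to size $h$ over times of order $1/h$.

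The missing idea is the paper's explicit corrector. One writes the $O(h^2)$ forcing as $-i\frac{V''(a)}{2\mu^2}\bigl(\frac{\pi^2}{12}+x^2\bigr)\eta+O(h^3)$, solves $L_+ f=\bigl(\frac{\pi^2}{12}+x^2\bigr)\eta$ (Proposition \ref{p:new}: $f\in L^2$ exists, is exponentially localized, and is itself symplectically orthogonal to $\mathfrak g\cdot\eta$), and sets $\tilde w=-\frac{V''(a)}{2\mu^4}f=O(h^2)$. Then $w_1=w-\tilde w$ obeys an equation whose forcing is $O(h^3)$ (the time derivative of $\tilde w$ only produces $h^3\theta f$ with $\theta$ bounded), so the Lyapunov estimate on a window of length $c/h$ yields $\|w_1\|_{H^1}\le 4\sqrt{c_2}\,\|w_1(t_1)\|_{H^1}+ch^2$; iterating over $\sim\delta\log(1/h)$ such windows produces the loss $h^{-2\delta}$ and gives $\|w\|_{H^1}\lesssim h^{2(1-\delta)}$ (Theorem \ref{t:2}), which is where the improvement actually comes from. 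Finally, the paper does not take $(a,v,\gamma)$ solving \eqref{eq:t3} exactly in the ansatz: the modulation equations come with $O(h^{4(1-\delta)})$ errors, and a separate Gronwall/Duhamel comparison (Lemma \ref{L:ODEcompare}) is needed to replace them by the exact flow \eqref{eq:t3} without spoiling \eqref{eq:t1}; your proposal elides this step as well. (Your worry about drifting scaling is handled simply: $L^2$ conservation plus orthogonality force $1-\mu\sim\|w\|_{L^2}^2$, so $\mu$ stays near $1$.)
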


\renewcommand\thefootnote{\dag}%

\begin{figure}
\begin{center}
\includegraphics[width=6in]{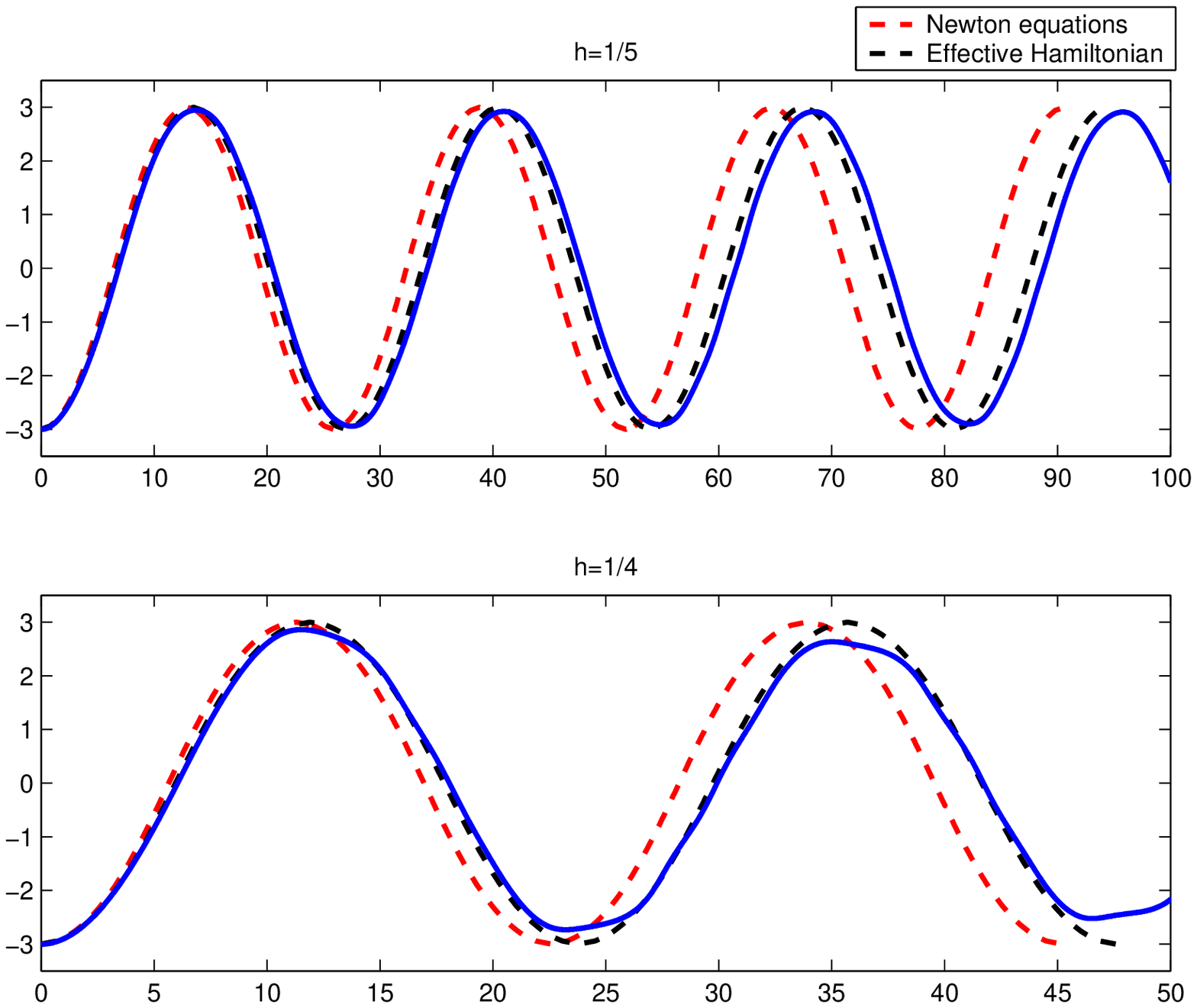}
\end{center}
\caption{Comparison of the dynamics of the center of motion of
the soliton for the Gross-Pitaevskii equation with a slowly 
varying potential, 
$$ iu_t = -\frac12 u_{xx} - |u|^2 u - \sech^2 ( h x ) u \,, \ \ 
h = 1/5 \,, \ \ h=1/4 \,, $$
and initial condition in \eqref{eq:nls} with $ v_0=0$, $ a_0=-3$.
The dashed {{red}} curve shows the solution to Newton's equations
used in \cite{BJ} and \cite{FrSi}, the {{blue}} curve shows
the center of the approximate soliton $ u $,  and the black dashed
curve is given by the equations of motion of the effective 
Hamiltonian
\[  \hspace{-1in} 
\frac 12 \left( v^2  +  \sech^2 ( h \bullet ) * \sech^2 (a )  \right) \,.\]
The improvement of the approximation given by the effective Hamiltonian
is remarkable even in the case of $ h = 1/4 $ in which we already see
radiative dissipation in the first cycle.}
\label{f:drama}
\end{figure}

As in \cite{HZ1} 
the proof of our theorem follows the long tradition of the study 
of stability of solitons which started with the work of M.I. Weinstein
\cite{We}. The interaction of solitons with external potentials was 
studied in the stationary semiclassical setting by 
Floer and A. Weinstein \cite{FlWe} and Oh \cite{Oh}, and the first
dynamical result belongs to Bronski and Jerrard \cite{BJ}, see also
\cite{CaMi},\cite{Ke}.
The semiclassical 
regime is equivalent to considering slowly varying potentials,
and the dynamics in that 
case was studied in \cite{FrSi},\cite{FrSi1},\cite{FrY},\cite{Wal} (see
also numerous references given there).

The results of \cite{FrSi} in the special case of \eqref{eq:nls}
give
\begin{equation}
\label{eq:tslow}
\| u ( t ,\bullet ) -  e^{i\bullet v(t) }e^{i\gamma ( t ) }
\sech(\bullet -a ( t) ) \|_{H^1 ( \RR ) } \leq C h^{1-\delta}  \,,  \ \
0 \leq t \leq \delta \log(1/h)/h \,,
\end{equation}
where
\begin{gather}
\label{eq:t3slow}
\begin{gathered}
\dot {  a} =   v + {\mathcal O} ( h^2 ) \,, \ \
\dot {  v} =  - V' ( a) + {\mathcal O} ( h^2 ) \,,
\\
\dot {  \gamma} = 1/2 + {v^2}/2 
- V(  a ) +  {\mathcal O} ( h^2 ) \,,
\end{gathered}
\end{gather}
with initial data $(a_0,v_0,0)$.  In other words, the motion of
the soliton is approximately given by Newton's equations,
$ \dot a = v $, $ \dot v = - V' ( a ) $. However it is not
clear if, as in \eqref{eq:t3}, the errors 
$ {\mathcal O} ( h^2 ) $ in 
\eqref{eq:t3slow} can be removed without affecting \eqref{eq:tslow}
-- see \S \ref{ode}. Since \eqref{eq:t3} imply equations \eqref{eq:t3slow},
Theorem \ref{t:1} shows that we can replace $ h^{1-\delta } $ by 
$ h^{2-\delta} $ in \eqref{eq:tslow}, keeping \eqref{eq:t3slow}.

When $ V ( x) = h^2 W ( x) $,
and $ W \in H^{-1} ( \RR ; \RR) $ \cite{HZ1}, 
Newton's equations are clearly not applicable. To 
describe evolution up to time $ \log ( 1/h ) /h $ we introduced a
natural effective Hamiltonian. A numerical experiment shown 
in \cite[Fig.2]{HZ1} and reproduced here in Fig.\ref{f:drama} 
suggested that the effective Hamiltonian approach gives a dramatic
improvement for slowly varying potentials. Theorem \ref{t:1} 
(and a more precise Theorem \ref{t:2} below) quantify that
improvement by changing the errors from $ h $ in \eqref{eq:tslow} to 
$ h^2 $ in \eqref{eq:t1}. 

To describe the natural effective Hamiltonian 
we recall  that the 
Gross-Pitaevski equation \eqref{eq:nls} is the 
equation for the Hamiltonian flow of 
\begin{equation}
\label{eq:HV}
 H_V ( u ) \defeq \frac14 \int ( |\partial_x u |^2 - |u|^4 ) dx
+ \frac12 \int V  | u|^2 \,, \ \ 
\end{equation}
with respect to the symplectic form on $ H^1 ( \RR , \CC ) $ (considered
as a real Hilbert space):
\begin{equation}
\label{eq:ome1}
 \omega ( u , v ) = \Im \int u \bar v \,, \ \ u , v \in H^1 ( \RR , \CC) 
\,.\end{equation}
When $ V \equiv 0 $, $ \eta = \sech $ is a 
minimizer  
of $ H_0 $ with the prescribed $ L^2 $ norm ($\| \eta \|^2_{L^2} = 2 $):
\begin{equation}
\label{eq:defE} d {\mathcal E}_{\eta } = 0 \,, \ \ {\mathcal E}( u ) 
\defeq H_0 ( u ) + \frac14 \| u \|_{L^2}^2 \,.
\end{equation}
Here $ d {\mathcal E}_\eta $ is the differential of $ {\mathcal E}
\;:\; H^1 \rightarrow \RR $, see \S \ref{s:sy}.

The flow of $ H_0 $ is tangent to the 
manifold of solitons,
\[  M = \{ e^{i \gamma} e^{ i v ( x - a ) } \mu \, 
\sech ( \mu ( x - a ) ) \,, \ \ 
a, v, \gamma \in \RR \,, \ \ \mu \in \RR_+ \} \,, \]
which of course corresponds to the fact that the solution of \eqref{eq:nls}
with $ V = 0 $ and $ u_0 ( x , 0 ) = e^{ i \gamma_0 + i v_0 ( x - a_0 ) } \mu
 \sech ( \mu ( x - a_0 ) ) $, is
\begin{gather}
\label{eq:frees} 
\begin{gathered}
 u ( x , t) = g(t)\cdot \eta \defeq
e^{ i \gamma + i v_0  ( x - a_0) + i ( \mu^2 - v^2) t/ 2 } 
\mu \, \sech ( \mu ( x - a_0 - v_0 t ) 
)
\,, 
\\
g ( t) \defeq ( a_0 + v_0 t, v_0, \gamma_0 + ( \mu^2 +v^2 )/2 , \mu ) \,.
\end{gathered}
\end{gather}
The symplectic form \eqref{eq:ome1} restricted to $ M $ is
\begin{equation}
\label{eq:ome2} \omega\rest_M
 = \mu dv\wedge da + vd\mu\wedge da + d\gamma \wedge d\mu
\,,\end{equation}
see \S \ref{ssms}. The evolution of the parameters $ ( a, v ,\gamma, \mu ) $
in the solution \eqref{eq:frees} follows the Hamilton flow of 
\[  H_0\rest_M = \frac { \mu v^2 }2 - \frac{\mu^3 }6 \,, \]
with respect to the symplectic form $ \omega\rest_M $. 

 The system of equations \eqref{eq:t3} is
obtained using the following basic idea: if a Hamilton flow of $ H $, 
with initial condition on a symplectic submanifold, $ M $, 
stays close to $ M $, then the flow is close to the Hamilton 
flow of $ H \rest_M $.  
In our case $ M$ is the manifold of solitons and $ H $ is given 
by \eqref{eq:HV}
\begin{equation}
\label{eq:Heff}
 H_V\rest_M ( a ,v, \gamma, \mu ) =
\frac { \mu v^2 }2 - \frac{\mu^3 }6 +
\frac12 \mu^2 (V  * \sech^2) ( \mu a ) \,.
\end{equation}
The equations \eqref{eq:t3} are simply the equations of the flow of 
$ H_{V } \rest_M $ -- see \S \ref{s:GPH}. 
They are easily seen to imply
\eqref{eq:t3slow} 
but some $ h$ corrections are built into the 
classical motion leading to the improvement in Theorem \ref{t:1}, 
see also Fig.\ref{f:drama}.

As in previous works all of this hinges on the proximity of 
$ u ( x , t) $ to the manifold of solitons, $ M $. In 
\cite{HZ1} we followed \cite{FrSi} and used 
Weinstein's Lyapunov function \cite{We},
\[ L ( w ) \defeq {\mathcal E} ( w + \eta ) - {\mathcal E} ( \eta ) \,, \]
where $ {\mathcal E} $ is given by \eqref{eq:defE}.
Here, in the notation similar to \eqref{eq:frees}, 
\[ u ( t) = g(t ) \cdot ( \eta + w ( t) ) \,, \]
for an optimally chosen $ g ( t) = ( a ( t) , v( t ) , \gamma ( t) , 
\mu ( t ) ) $ -- see Lemma \ref{l:stan}.

The use of $ L ( w ) $ seems essential for the all-time orbital stability
of solitons. Up to times $ h^{-1} \log ( 1/h) $
we found that it is easier to use its quadratic approxition
\begin{equation}
\label{eq:defL}
 L_0 \defeq \langle {\mathcal L} w , w \rangle \,, \ \  {\mathcal L} w 
\defeq {\mathcal E}''_\eta ( w )  = 
 - \frac12 \partial_x^2 w - 2 \eta^2 w - \eta^2 \bar w 
+ \frac12 w \,. 
\end{equation}
Rather than use conservation of energy, $ H_V ( u ) $, we use the
nonlinear equation for $ w ( t )$ in estimating $ L_0 ( w) $ -- 
see \S \ref{pr}. That involves solving a nonhomogeneous linear equation
approximately using the spectral properties of $ {\mathcal L }$
-- see \eqref{E:forcedlinear} and Proposition \ref{p:new}. 
The fact that the solution of that equation is of size $ h^2 $ in 
$ H^1 $ gives the first indication of the improvement based on 
using the effective Hamiltonian.
Ultimately, this makes the argument simpler than 
in the case of $ W \in H^{-1} $ (or the special case of $ W = \delta_0 $).

The paper is organized as follows. In \S \ref{rhs} we recall the Hamiltonian
structure of the nonlinear flow of \eqref{eq:nls} and describe the
manifold of solitons. As in \cite{HZ1}, 
its identification with the Lie group $ G = 
H_3 \ltimes \RR_+ $, where $ H_3 $ is the Heisenberg group, provides
useful notational shortcuts. In \S \ref{re} we describe
the reparametrized evolution. The starting point there is an 
application of the implicit function theorem and a decomposition of the 
solution into symplectically orthogonal components. That method
has a long tradition in soliton stability and we learned it from 
\cite{FrSi}. The analysis of the orthogonal component using 
an approximate solution to a linear equation and a bootstrap
argument are presented in \S \ref{pr}. This results in a 
somewhat more precise version of Theorem \ref{t:1} -- see Theorem \ref{t:2}.
The ODE estimates needed for the exact evolution \eqref{eq:t3} 
are given in \S \ref{ode}. Finally we show how Theorem \ref{t:2} implies
Theorem \ref{t:1}. Except for basic material such 
as properties of Sobolev spaces or elementary symplectic geometry, 
and a reference
to the proof of Proposition \ref{p:coer}, 
the paper is meant to be self contained. 

\noindent
{\sc Acknowledgments.} 
The work of the first author was supported in part by an NSF postdoctoral 
fellowship, and that
of the second second author by the NSF grant DMS-0654436.


\section{The Hamiltonian structure and the manifold of solitons}
\label{rhs}

In this section we recall the well known facts about the
Hamiltonian structure of the nonlinear Schr\"odinger equation.
The manifold of solitons is given as an orbit of a semidirect
product of the Heisenberg group and $ \RR_+ $.

\subsection{Symplectic structure}
\label{s:sy}

In our work, we consider
\[ V \defeq H^1(\mathbb{R}, \mathbb{C})\subset L^2(\mathbb{R},\mathbb{C})\,,\]
viewed as a {\em real} Hilbert space.
The inner product and the symplectic form are given by 
\begin{equation}
\label{eq:omega}\
\langle u,v \rangle  \defeq \Re \int u\bar v \,, \  \ 
\omega(u,v) \defeq \langle i u , v \rangle = \Im \int u\bar v\,, 
\end{equation}
Let 
$H:V\to \mathbb{R}$ be a function, a Hamiltonian.  
The associated Hamiltonian
vector field is a map $\Xi_H : V\to TV$, 
which means that for a particular point $u\in V$, we have 
$(\Xi_H)_u \in T_uV$. The vector field  $\Xi_H$ is defined by the relation
\begin{equation}
\label{eq:Hamvf} \omega(v , (\Xi_H)_u) = d_uH(v)
\,, \end{equation}
where $v\in T_uV$, and $d_uH:T_uV \to \mathbb{R}$ is defined by
$$d_uH(v) = \frac{d}{ds}\Big|_{s=0} H(u+sv) \,. $$
In the notation above
\begin{equation}
\label{eq:nats}  dH_u ( v ) = \langle dH_u , v \rangle \,, 
\ \  (\Xi_H)_u = \frac 1 i dH_u  \,. 
\end{equation}

If we take $ V = H^1 ( \RR , \CC ) $ with the symplectic form \eqref{eq:omega},
and
$$H(u) = \int \frac14|\partial_x u|^2 - \frac14|u|^4$$
then we can compute
\begin{align*}
d_uH(v) &= \Re \int ( (1/2) \partial_x u \partial_x \bar v - |u|^2u \bar v)\\
&= \Re \int ( - (1/2)\partial_x^2 u - |u|^2u)\bar v \,. 
\end{align*}
Thus, in view of \eqref{eq:nats} and \eqref{eq:Hamvf}, 
$$(\Xi_H)_u = \frac 1 i \left( - \frac12 \partial_x^2u - |u|^2u \right) $$
The flow associated to this vector field (Hamiltonian flow) is
\begin{equation}
\label{eq:Hflow}
\dot u = (\Xi_H)_u =  \frac 1 i  \left( - 
\frac12 \partial_x^2u - |u|^2u \right) 
\,.
\end{equation}

For future reference we state two general lemmas of symplectic 
geometry. The simple proofs can be found in \cite[\S 2]{HZ1}.
\begin{lem}
\label{l:gen1}
Suppose that $ g : V \rightarrow V $ is a diffeomorphism such 
that $ g^* \omega = \mu ( g ) \omega $, where $ \mu( g ) \in C^\infty ( 
V ; \RR)  $. Then for $ f \in C^\infty ( V , \RR ) $,
\begin{equation} 
\label{eq:lg1}
(g^{-1})_* \Xi_f(g(\rho)) = \frac{1}{\mu(g)}\Xi_{g^*f}(\rho) \,, \ \ 
\rho \in V \,. 
\end{equation}
\end{lem}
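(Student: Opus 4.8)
The plan is to unwind the definition \eqref{eq:Hamvf} of the Hamiltonian vector field and push everything through the diffeomorphism $ g $ using naturality of the pullback. Write $ \rho \in V $ and let $ v \in T_\rho V $ be an arbitrary tangent vector; I want to identify $ (g^{-1})_* \Xi_f ( g ( \rho ) ) $ as an element of $ T_\rho V $, so I test it against $ v $ under the symplectic form $ \omega $. First I would compute
\begin{equation*}
\omega_\rho \bigl( v , (g^{-1})_* (\Xi_f)_{g(\rho)} \bigr)
= \bigl( (g^{-1})^* \omega \bigr)_\rho \bigl( (g^{-1})_* g_* v , (g^{-1})_* (\Xi_f)_{g(\rho)} \bigr)
= \bigl( (g^{-1})^* \omega \bigr)_{g(\rho)} \bigl( g_* v , (\Xi_f)_{g(\rho)} \bigr) \,,
\end{equation*}
where in the middle step I rewrote $ v = (g^{-1})_* (g_* v) $ and used that $ (g^{-1})_* $ is an isomorphism of tangent spaces, and in the last step I used the defining property of the pullback of a $ 2 $-form under $ g^{-1} $, namely that $ (g^{-1})^* \omega $ evaluated on $ (g^{-1})_* $-images equals $ \omega $ evaluated on the original vectors at the image point. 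The key input is the hypothesis $ g^* \omega = \mu ( g ) \omega $: applying $ (g^{-1})^* $ to both sides and using $ (g^{-1})^* g^* = \mathrm{id} $ gives $ \omega = (g^{-1})^* ( \mu ( g ) \omega ) = ( \mu ( g ) \circ g^{-1} ) \, (g^{-1})^* \omega $, so $ (g^{-1})^* \omega = ( 1 / ( \mu ( g ) \circ g^{-1} ) ) \, \omega $ as $ 2 $-forms on $ V $. (Here $ \mu ( g ) $ is a function on $ V $; I am being slightly careful about the base point, but since the final identity is pointwise at $ g ( \rho ) $ this bookkeeping is harmless.)

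Substituting, the right-hand side of the display becomes
\begin{equation*}
\frac{1}{\mu(g)(\rho)} \, \omega_{g(\rho)} \bigl( g_* v , (\Xi_f)_{g(\rho)} \bigr)
= \frac{1}{\mu(g)(\rho)} \, d_{g(\rho)} f ( g_* v )
= \frac{1}{\mu(g)(\rho)} \, d_\rho ( g^* f ) ( v )
= \omega_\rho \Bigl( v , \tfrac{1}{\mu(g)(\rho)} (\Xi_{g^*f})_\rho \Bigr) \,,
\end{equation*}
where the first equality is the definition \eqref{eq:Hamvf} of $ \Xi_f $ at the point $ g ( \rho ) $, the second is the chain rule $ d_\rho ( f \circ g ) = d_{g(\rho)} f \circ g_* $ (i.e.\ $ d ( g^* f ) = g^* ( d f ) $), and the third is again \eqref{eq:Hamvf}, now for the Hamiltonian $ g^* f $ on $ V $. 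Since $ \omega_\rho ( v , \cdot ) $ agrees on the two vectors $ (g^{-1})_* (\Xi_f)_{g(\rho)} $ and $ \frac{1}{\mu(g)(\rho)} (\Xi_{g^*f})_\rho $ for every $ v \in T_\rho V $, nondegeneracy of $ \omega $ — which on $ H^1 ( \RR , \CC ) $ follows from \eqref{eq:omega}, since $ \omega ( i u , u ) = \| u \|_{L^2}^2 > 0 $ for $ u \neq 0 $, so the map $ u \mapsto \omega ( u , \cdot ) $ is injective — forces the two vectors to coincide, which is exactly \eqref{eq:lg1}.

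There is no real obstacle here: the statement is formal symplectic linear algebra together with the pullback–chain-rule identity $ d ( g^* f ) = g^* df $, and the infinite-dimensional setting is handled by the fact that $ \omega $ is a weakly nondegenerate constant-coefficient form on the Hilbert space $ V $, so the "test against all $ v $ and cancel $ \omega $" step is legitimate. The only point requiring a little care is keeping track of where the scalar function $ \mu ( g ) $ is evaluated when one passes $ g^* \omega = \mu ( g ) \omega $ through $ (g^{-1})^* $; writing things pointwise at $ \rho $ and $ g ( \rho ) $ from the start, as above, avoids any ambiguity. (This is the computation carried out in \cite[\S 2]{HZ1}; I have merely reproduced the mechanism.)
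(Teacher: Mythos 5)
Your proof is correct: the paper itself gives no argument for this lemma (it defers to \cite[\S 2]{HZ1}), and your computation --- testing against an arbitrary tangent vector, using $(g^{-1})^*\omega = (\mu(g)\circ g^{-1})^{-1}\omega$, the chain rule $d(g^*f)=g^*(df)$, the definition \eqref{eq:Hamvf} at both $\rho$ and $g(\rho)$, and weak nondegeneracy of $\omega$ (via $\omega(iu,u)=\|u\|_{L^2}^2$) --- is exactly the standard proof being referenced. No gaps; the bookkeeping of where $\mu(g)$ is evaluated is handled correctly.
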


Suppose that $ f \in C^\infty ( V ; \RR ) $ and that $ df ( \rho_0  ) = 0 $.
Then the Hessian of $ f $ at $ \rho_0 $, 
$ f'' ( \rho_0 ) : T_\rho V \mapsto T^*_\rho V $, is well defined. 
We can identify $ T_\rho V $ with $ T^*_\rho V $ using the innner product, 
and define
the Hamiltonian map $ F : T_\rho V \rightarrow T_\rho V $ by
\begin{equation} 
\label{eq:HamF}
 F = \frac 1 i  f'' ( \rho_0 ) \,, \ \  \langle f''(\rho_0 ) X , Y \rangle
  = \omega ( Y , F X )
\,. 
\end{equation}
In this notation we have 
\begin{lem}
\label{l:gen2}
Suppose that $ N \subset V $ is a finite dimensional symplectic submanifold of 
$ V $, 
and $ f \in C^\infty ( V , \RR )$ satisfies 
\[  \Xi_f ( \rho ) \in T_\rho N \subset T_\rho V \,, \ \ \rho \in N \,.\]
If at $ \rho_0 \in N $, $ df ( \rho_0) = 0 $, then the Hamiltonian map
defined by \eqref{eq:HamF} satisfies
\[  F ( T_\rho N ) \subset T_\rho N  \,. \]
\end{lem}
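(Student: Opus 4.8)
The plan is to read the hypothesis $\Xi_f(\rho)\in T_\rho N$ as the infinitesimal form of the invariance of $N$ under the Hamiltonian flow of $f$, and to differentiate it at the critical point $\rho_0$. Fix $X\in T_{\rho_0}N$ and choose a smooth curve $s\mapsto\rho(s)\in N$ with $\rho(0)=\rho_0$ and $\dot\rho(0)=X$; the hypothesis gives $(\Xi_f)_{\rho(s)}\in T_{\rho(s)}N$ for small $s$. Since $V$ is a vector space, $T_uV=V$ for all $u$, and by \eqref{eq:nats} the map $u\mapsto(\Xi_f)_u=\tfrac1i(df)_u$ has Fr\'echet derivative at $\rho_0$ given by
\[ \frac{d}{ds}\Big|_{s=0}(\Xi_f)_{\rho(s)} \;=\; \frac1i\, f''(\rho_0)X \;=\; FX \,, \]
where $f''(\rho_0)$ is the Hessian regarded, via the inner product, as the self-adjoint operator of \eqref{eq:HamF}. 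So everything comes down to showing that this derivative lies in $T_{\rho_0}N$.

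This is precisely where $df(\rho_0)=0$ is used: it forces $(\Xi_f)_{\rho_0}=\tfrac1i(df)_{\rho_0}=0$, so the vector field $\Xi_f$, which along $N$ is tangent to $N$, vanishes at $\rho_0$. For a vector field that is everywhere tangent to a (finite-dimensional) submanifold \emph{and} vanishes at a point, the Fr\'echet derivative at that point maps the tangent space to itself. To see this, translate $\rho_0$ to the origin and write $N=\{\,x+\phi(x):x\in U\,\}$ near $0$, with $U\subset T_{\rho_0}N$ a neighbourhood of $0$ and $\phi:U\to(T_{\rho_0}N)^\perp$ smooth, $\phi(0)=0$, $d\phi(0)=0$. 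A tangent vector field then has the form $W(x+\phi(x))=a(x)+d\phi(x)\,a(x)$ with $a(x)\in T_{\rho_0}N$, and vanishing at $\rho_0$ gives $a(0)=0$. Differentiating $s\mapsto W(\rho(s))$ along $\rho(s)=sX+\phi(sX)$ at $s=0$, the $(T_{\rho_0}N)^\perp$-component is $(D^2\phi(0)X)\,a(0)+d\phi(0)(Da(0)X)=0$, since $a(0)=0$ and $d\phi(0)=0$; hence $\tfrac{d}{ds}\big|_{s=0}W(\rho(s))=Da(0)X\in T_{\rho_0}N$. Combined with the first paragraph, $FX\in T_{\rho_0}N$, which is the assertion.

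Equivalently, one can argue through the flow $\varphi_t$ of $f$: the tangency hypothesis says $\varphi_t(N)\subset N$, while $df(\rho_0)=0$ makes $\rho_0$ a fixed point, so $d\varphi_t(\rho_0)$ maps $T_{\rho_0}N$ into $T_{\varphi_t(\rho_0)}N=T_{\rho_0}N$ for all $t$; differentiating $t\mapsto d\varphi_t(\rho_0)X$ at $t=0$ and using that $T_{\rho_0}N$ is a closed subspace gives $FX\in T_{\rho_0}N$. The point needing care in this variant is the existence of the Hamiltonian flow of $f$ on the infinite-dimensional space $V$, which the curve-differentiation argument sidesteps. The only real obstacle is to notice that $df(\rho_0)=0$ is indispensable: without the vanishing of $\Xi_f$ at $\rho_0$, differentiating a tangent field produces a genuine normal (second-fundamental-form) term, and then $F$ need not preserve $T_{\rho_0}N$.
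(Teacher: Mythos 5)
Your proof is correct, and it is essentially the argument the paper has in mind (the paper itself only cites \cite[\S 2]{HZ1} for it): since $df(\rho_0)=0$ makes $\rho_0$ a zero of the vector field $\Xi_f$, which is tangent to $N$, its linearization $F=\frac1i f''(\rho_0)$ at that zero preserves $T_{\rho_0}N$, and your local graph computation correctly shows the normal (second-fundamental-form) contribution drops out precisely because $\Xi_f(\rho_0)=0$. Your remark that the curve-differentiation version avoids invoking the Hamiltonian flow on the infinite-dimensional space $V$ is a sensible refinement of the flow-based variant.
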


The Hamiltonian map, $ F $, is simply the linearization of $ \Xi_f $ 
at a critical point of $f$. 
An example relevant to this paper is 
\[ f ( u ) = {\mathcal E} ( u ) \defeq H_0 ( u ) + \frac 14 \| u \|^2 \,, \]
see \eqref{eq:defE}. The soliton $ \eta $ is a critical point of
$ {\mathcal E} $ and the Hessian of $ E $ is given by $ {\mathcal L} $ in 
\eqref{eq:defL}. The Hamiltonian map $ F = (1/i) {\mathcal L } $ is
the linearization of $ \Xi_f $ at $ \eta $. In other words, 
$ ( 1/ i ) (  {\mathcal L} - 1/2 ) $ is the linearization of 
\eqref{eq:nls} (with $ V = 0 $) at $ \eta$. The $ 1/2 $ term comes from 
$ \| u \|^2 /4 $ in the definition of $ {\mathcal E}$. 

In Lemma \ref{l:gen2} we can take $ N $ to be the four dimensional
manifold of solitons and $ \rho = \eta $. It then says that 
$ ( 1/i ) {\mathcal L} $ preserves the symplectic orthogonality
of $ w \in T_\eta V $ to $ T_\eta M $.

\subsection{Manifold of solitons as an orbit of a group}

For $ g = ( a, v, \gamma , \mu ) \in \RR^3 \times \RR_+ $ we define
the following map 
\begin{equation}
\label{eq:repG}    H^1 \ni u \longmapsto g\cdot u \in H^1 \,, \ \ 
(g\cdot u)(x) \defeq e^{i\gamma}e^{iv(x-a)}\mu u(\mu(x-a)) \,. 
\end{equation}
This action gives a group structure on $ \RR^3 \times \RR_+ $
and it is easy to check that this
transformation group is a semidirect product of the Heisenberg group
$ H_3 $ and $ \RR_+ $:
\[ G= H_3\ltimes\mathbb{R}_+ \,, \ \ 
\mu\cdot(a,v,\gamma) = (\frac{a}{\mu} ,\mu v, \gamma) \,.\]
Explicitly, the group law on $ G $ is given by 
\[ (a,v,\gamma,\mu) \cdot (a',v',\gamma',\mu') = (a'',v'',\gamma'',\mu'') \,, 
\] 
where 
\[ \begin{split}
& v'' = v + v'\mu \,, \ \ 
 a'' = a + \frac{a'}{\mu} \,, \ \
 \gamma'' = \gamma + \gamma' + \frac{va'}{\mu} \,, \ \
 \mu'' = \mu \mu'
\end{split} \]

The action of $ G $ is not symplectic but it is {\em conformally symplectic}
in the sense that 
\begin{equation} 
\label{eq:conf} g^*\omega = \mu(g)\omega
\,, \ \ g = ( h ( g ) , \mu ( g ) ) \,, \ \ \mu ( g ) \in \RR_+ \,,
\end{equation}
as is easily seen from \eqref{eq:omega}.

The Lie algebra of $ G$, denoted by $ {\mathfrak g } $, 
is generated by $ e_1, e_2, e_3 , e_4 $, 
\[ \begin{split} & \exp( t e_1 ) = (t ,0,0,1)  \,, \ \
\exp( t e_2 ) = (0 ,t,0,1)  \,,
 \\ &  \exp(t e_3 ) = ( 0, 0 , t, 1 ) \,, \ \ \exp ( t e_4 )
= ( 0 , 0 , 0 , e^t) \,, \end{split} \]
and the bracket acts as follows:
\begin{equation}
\label{eq:liea}
[e_1,e_4]=e_1, \quad [e_2,e_4] = -e_2, \quad [e_1,e_2]=-e_3, 
\quad [e_3,\bullet ] =0 \,, \end{equation}
 so $e_3$ is in the center. 
The infinitesimal representation obtained from \eqref{eq:repG} 
is given by
\begin{equation}
\label{eq:liea1}  e_1 = -\partial_x \,, \ \
e_2 = ix \,,  \ \ e_3 = i \,, \ \  e_4 = \partial_x \cdot x \,. 
\end{equation}
It acts, for instance on $ {\mathcal S}( \RR ) \subset H^1 $, 
and by $ X \in {\mathfrak g} $ we will denote a linear combination of
the operators $ e_j $.

The proof of the following standard lemma can be again found in 
\cite[\S 2]{HZ1}:
\begin{lem}
\label{l:stan}
Suppose $ \RR \ni t \mapsto g ( t) $ is a $ C^1 $ function and that
$ u \in {\mathcal S} ( \RR ) $. Then, in the notation of \eqref{eq:repG},
\[  \frac{d}{dt } g(t) \cdot u = g( t ) \cdot ( X ( t ) u ) \,, \]
where $ X( t ) \in {\mathfrak g } $ is given by 
\begin{equation}
\label{eq:lstan}
X ( t ) =   \dot a  ( t) \mu ( t) e_1 +
 \frac {\dot v (t) } { \mu ( t ) } e_2 + 
(\dot \gamma ( t) - \dot a ( t ) v ( t )  ) e_3 + 
\frac  {\dot\mu( t) }{ \mu ( t ) } e_4 \,, 
\end{equation}
where $ g ( t) = ( a ( t ) , v ( t) , \gamma( t ) , \mu ( t) ) $.
\end{lem}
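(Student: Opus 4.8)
The plan is to prove this by differentiating the defining formula \eqref{eq:repG} directly in $t$ and then recognizing the outcome as $g(t)$ acting, again via \eqref{eq:repG}, on a linear combination of the operators $e_1,\dots,e_4$ of \eqref{eq:liea1} applied to $u$. Since $u\in{\mathcal S}(\RR)$ and $t\mapsto g(t)=(a(t),v(t),\gamma(t),\mu(t))$ is $C^1$, the function $(t,x)\mapsto(g(t)\cdot u)(x)$ is $C^1$ in $t$ and Schwartz in $x$ locally uniformly in $t$, so one may differentiate pointwise in $x$ and read off an identity of ${\mathcal S}(\RR)$-valued (in particular $H^1$-valued) functions of $t$.

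First I would set $y=\mu(t)(x-a(t))$ and apply the product and chain rules to $(g(t)\cdot u)(x)=e^{i\gamma(t)}e^{iv(t)(x-a(t))}\mu(t)\,u(y)$. All resulting terms carry the common prefactor $e^{i\gamma}e^{iv(x-a)}\mu$: three of them equal this prefactor times $u(y)$ times, respectively, $i\dot\gamma$ (from $e^{i\gamma}$), $i\dot v(x-a)-iv\dot a$ (from $e^{iv(x-a)}$), and $\dot\mu/\mu$ (from the scalar factor $\mu$); the fourth equals the prefactor times $u'(y)(\dot\mu(x-a)-\mu\dot a)$ (from the argument of $u$).

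Next I would convert each term into the form $g(t)\cdot(\,\cdot\,)$, using $x-a=y/\mu$ and the elementary identities
\[
\begin{aligned}
(g\cdot(e_2 w))(x) &= i\mu(x-a)(g\cdot w)(x)\,, &
(g\cdot(e_3 w))(x) &= i(g\cdot w)(x)\,, \\
(g\cdot(-e_1 w))(x) &= e^{i\gamma}e^{iv(x-a)}\mu\, w'(y)\,, &
(g\cdot(e_4 w))(x) &= e^{i\gamma}e^{iv(x-a)}\mu\,(w(y)+y\,w'(y))\,,
\end{aligned}
\]
the last of which is the only point needing attention: $e_4=\partial_x\cdot x$ acts as $w\mapsto w+x w'$, not as $x\partial_x$. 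With these one rewrites the $u'(y)$ terms as combinations of $e_1 u$ and $e_4 u$, the factor $x-a$ as $e_2 u$, the factors $i\dot\gamma$ and $-iv\dot a$ as $e_3 u$, and the remaining $\dot\mu/\mu$ contribution as the rest of $e_4 u$; the two $\pm(\dot\mu/\mu)\,u$ contributions cancel, and collecting coefficients yields
\[
\partial_t\big(g(t)\cdot u\big)=g(t)\cdot\Big(\dot a\mu\,e_1 u+\tfrac{\dot v}{\mu}\,e_2 u+(\dot\gamma-\dot a v)\,e_3 u+\tfrac{\dot\mu}{\mu}\,e_4 u\Big)\,,
\]
which is \eqref{eq:lstan}.

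I do not expect a genuine obstacle here: the argument is routine bookkeeping, the only subtleties being the precise form of $e_4$ (the extra $+1$ coming jointly from the ordering in $\partial_x\cdot x$ and from differentiating the scalar $\mu$) and the consistent use of $y$ in place of $x$. A more conceptual alternative, which I would at least mention, is to verify the statement first at a $t_0$ with $g(t_0)$ the identity $(0,0,0,1)$ --- where \eqref{eq:lstan} just records the tangent vectors of the subgroups $\exp(te_j)$ --- and then to reduce the general case by writing $g(t+s)=g(t)\cdot h_t(s)$ with $h_t(0)$ the identity, using that \eqref{eq:repG} is a left action to pull $g(t)$ outside the $s$-derivative, and computing $\frac{d}{ds}\Big|_{s=0}h_t(s)$ from the explicit group law; this displays the coefficients $\dot a\mu$, $\dot v/\mu$, $\dot\gamma-\dot a v$, $\dot\mu/\mu$ as the left-translation of $\dot g(t)$ to the identity.
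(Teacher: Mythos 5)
Your computation is correct: differentiating $(g(t)\cdot u)(x)=e^{i\gamma}e^{iv(x-a)}\mu\,u(\mu(x-a))$ by the product and chain rules and re-expressing everything through $e_1,\dots,e_4$ evaluated at $y=\mu(x-a)$ (with the convention $e_4u=u+xu'$ handled as you do) reproduces exactly the coefficients $\dot a\mu$, $\dot v/\mu$, $\dot\gamma-\dot a v$, $\dot\mu/\mu$ of \eqref{eq:lstan}, and your alternative reduction to the identity via the group law gives the same coefficients. The paper does not prove the lemma here but defers to \cite[\S 2]{HZ1}, where the argument is this same standard direct computation, so your proposal matches the intended proof.
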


The manifold of solitons is an orbit of this group, $ G \cdot \eta $, 
to which 
$ \Xi_H $, defined in \eqref{eq:Hamvf}, is tangent. In view of
\eqref{eq:Hflow} that means that 
\[  i \left( \frac12 \partial_x^2 \eta + |\eta|^2 \eta \right) = 
X \cdot \eta \,, \]
for some $ X \in {\mathfrak g } $. The simplest choice is given 
by taking $ X = \lambda i  $, $ 
\lambda \in \RR $, so that $ \eta $ solves
a nonlinear elliptic equation 
$$-\frac12 \eta'' - \eta^3 + {{\lambda}}\eta =0 \,. $$
This has a solution in $ H^1 $ if $ \lambda = \mu^2/2 > 0 $ and it then is
$\eta(x)= \mu \sech(\mu x)$.
We will fix $ \mu = 1$ so that 
$$\eta(x)=  \sech x \,. $$ 
Using Lemma \ref{l:gen1} we can check that $ G\cdot \eta $ is
the {\em only} orbit of $ G $ to which $ \Xi_H $ is tangent. 

We define the submanifold of solitons, $M \subset H_1$,
as the orbit of $\eta$ under $G$, 
$$M = G\cdot \eta \subset H_1$$
and thus we have the identifications
\begin{equation}
\label{eq:idG}
  M = G\cdot \eta \simeq G / \ZZ \,, \ \  T_\eta M = {\mathfrak g} \cdot 
\eta \simeq  {\mathfrak g }  \,. 
\end{equation}
The quotient corresponds to the $\ZZ$-action 
$$ ( a, v, \gamma, \mu)  \mapsto (a, v, \gamma + 2 \pi k , \mu ) \,, \ \ 
 k \in \ZZ $$

\subsection{Symplectic structure on the manifold of solitons}
\label{ssms}
We first compute the symplectic form $ \omega \rest_M $ on 
$ T_\eta M $ using the identification \eqref{eq:idG}:
\[ (\omega\rest_M )_\eta ( e_i , e_j ) = \Im \int (e_i \cdot \eta )(x)
(\overline {e_j \cdot \eta} ) ( x ) \,.\]
Since 
\[ \int \eta^2 ( x ) dx = 2 \,, \ \ \int \eta ( x) \partial_x \eta ( x ) = 0 
\,, \ \  \int \partial_x \eta ( x ) x \eta ( x ) dx = -1 \,, \]
we obtain from \eqref{eq:liea1} that 
\begin{equation}
\label{eq:omij}
  (\omega\rest_M )_\eta ( e_2 , e_1 ) = 1 \,, \ \ (\omega\rest_M)_\eta 
( e_3, e_4 ) = 1 \,, 
\end{equation}
and all the other $   (\omega\rest_M )_\eta ( e_i , e_j ) $'s vanish.
In other words,
\[    (\omega\rest_M )_\eta = (dv \wedge da + d \gamma \wedge d \mu )_
{ ( 0 ,0 , 0 , 1 ) } = ( d ( v da + \gamma d\mu ) )_{ ( 0, 0 , 0 , 1 )} \,. \]
Using \eqref{eq:idG} we conclude that 
\begin{equation}
\label{E:Momega}
\omega \rest_M  = \mu dv\wedge da + vd\mu\wedge da + d\gamma \wedge d\mu \,,
\end{equation}
see \cite[Lemma 2.3]{HZ1}.

Now let $f$ be a function defined on 
$M$, $ f = f ( a, v, \gamma, \mu ) $.
The associated Hamiltonian vectorfield, $\Xi_f$, is defined by 
$$\omega(\cdot, \Xi_f) = 
df = f_ada+f_vdv+f_\mu d\mu + f_\gamma d\gamma \,. $$
Using \eqref{E:Momega} we obtain 
\begin{equation}
\label{eq:XifM}
\Xi_f = \frac{f_v}{\mu} \partial_a 
+ \left( -\frac{f_a}{\mu}-\frac{vf_\gamma}{\mu}\right)\partial_v 
+ f_\gamma \partial_\mu + \left(v\frac{f_v}{\mu}-f_\mu\right)\partial_\gamma
\,. \end{equation} 
The Hamilton flow is obtained by solving
\[ 
\dot v = -\frac{f_a}{\mu}-\frac{vf_\gamma}{\mu}\,, \ \
\dot a = \frac{f_v}{\mu} \,, \ \
\dot \mu = f_\gamma \,, \ \ 
\dot \gamma = v \frac{f_v} \mu - f_\mu \,. \]
The restriction of 
$$ H ( u) = \frac14\int |\partial_x u|^2 - \frac14\int |u|^4$$ 
to $ M $ is given by computing by 
\begin{equation}
\label{eq:ffM}
 f ( a, v, \gamma, \mu ) = H ( g\cdot \eta ) = 
 \frac{\mu v^2}{2}-\frac{\mu^3}{6} \,, \ \ g = ( a, v , \gamma, \mu ) \,. 
\end{equation}
The flow of \eqref{eq:XifM} for this $ f $ describes the evolution of
a soliton.

\subsection{The Gross-Pitaevski Hamiltonian restricted to the 
manifold of solitons}
\label{s:GPH}

We now consider the Gross-Pitaevski Hamiltonian,
\begin{equation}
\label{eq:GPH}
H_q ( u ) \defeq \frac14 \int ( |\partial_x u |^2 - |u|^4 ) dx 
+ \frac12 \int V ( x) | u |^2 dx  \,, 
\end{equation}
and its restriction to $ M = G \cdot \eta $:
\begin{equation}
\label{eq:GPHM} 
{H_q }\rest_M = f ( a , v , \gamma, \mu ) = 
\frac { \mu v^2 }2 - \frac{\mu^3 }6 + 
\frac{\mu^2} 2   V * ( \sech^2 ( \mu \bullet ) ) ( a  ) \,. 
\end{equation}
The flow of $ (H_q)\rest_M $ can be read off from \eqref{eq:XifM}:
\begin{gather}
\label{eq:GPfl}
 \begin{gathered}
\dot v = -\frac{f_a}{\mu} - \frac{v f_\gamma}{\mu} = - \frac{\mu^2} 2 
V' * ( \sech^2 ( \mu \bullet ) ) ( a  )  \,, \ \ 
\dot a  = \frac{f_v}{\mu} = v \,, \ \ \  \dot \mu = f_\gamma = 0 \,, \\
 \dot \gamma  = v \frac{f_v} \mu - f_\mu  
= \frac12 v^2 +  \frac 12 \mu^2 - \mu
  V * ( \sech^2 ( \mu \bullet) ) ( a  )  
+ \mu V* ( x\, \sech^2(x)\tanh(x) \rest_{x=\mu \bullet }) ( a ) \,. 
\end{gathered} \end{gather}
This are the same equations as \eqref{eq:t3}. The evolution of $ a $ and 
$ v $ is simply the Hamiltonian evolution of 
$ ( v^2 + \mu^2 V * \sech^2 ( \mu \bullet) ( a))/2  $, $ \mu = \text{const} $. 
The more mysterious evolution of the phase $ \gamma $ is now explained
by \eqref{eq:GPHM}.

We can also rewrite \eqref{eq:GPfl} as follows:
$$f = \frac{\mu v^2}{2} - \frac{\mu^3}{6} 
+ \frac{\mu}{2}\int V\Big( \frac{x}{\mu} + a\Big) \eta^2(x)dx\,, $$ 
so that 
\begin{align*}
\dot \gamma &= \frac{v^2}{2}+\frac{\mu^2}{2} 
- \frac12\int V\Big(\frac{x}{\mu}+a\Big)\eta^2(x)\,dx 
+\frac12\int V'\Big(\frac{x}{\mu}+a\Big)\frac{x}{\mu}\eta^2(x)\, dx\\
&= \frac{v^2}{2}+\frac{\mu^2}{2}-V(a) + \frac{V''(a)\pi^2}{24\mu^2} 
+ \mathcal{O}(h^4) \,
\end{align*}
and
\begin{align*}
\dot v &= -\frac12\int V'\Big( \frac{x}{\mu}+a\Big)\eta^2(x)\,dx 
= -V'(a) - \frac{V'''(a)\pi^2}{24\mu^2} + \mathcal{O}(h^5)\, ,
\end{align*}
where we used that $\int \eta^2 =2$ and $\int x^2\eta^2 = \pi^2/6$ in the 
Taylor expansions. Here for the purpose of presentation we assumed that
$ W \in {\mathcal C}^5 $ but we will never use Taylor's formula with more
than four terms, for which $ {\mathcal C}^3 $ is only needed.

\section{Reparametrized evolution}
\label{re}

To see the effective dynamics described in \S \ref{s:GPH} we 
write the solution of \eqref{eq:nls} as 
\[  u ( t) = g ( t ) \cdot ( \eta + w ( t )  ) \,,  \ \ 
w ( t) \in H^1 ( \RR, \CC ) \,, \]
where $ w ( t) $ satisfies
\[  \omega ( w ( t) , X \eta ) = 0 \,, \ \ \forall X \in {\mathfrak g}\,.\]
To see that this decomposition is possible, 
initially for small times, we apply the 
following consequence of the implicit function theorem and the nondegeneracy 
of $ \omega\rest_M $ (see \cite[Proposition 5.1]{FrSi} for a more
general statement):
\begin{lem} 
\label{l:FGJS}
For $\Sigma \Subset G/\ZZ$ (where the topology on $G/\ZZ $ is given by 
the identification with  $ \mathbb{R}\times 
\mathbb{R} \times S^1 \times \mathbb{R}_+$) let 
$$U_{\Sigma,\delta} = \{ \, u \in H_1 \, : \, 
\inf_{g\in\Sigma}\|u -g\cdot \eta\|_{H^1}<\delta \} \,. $$
If $ \delta \leq \delta_0 = \delta_0 (\Sigma)$ then for any $ 
u \in U_{\Sigma,\delta}$, there exists a unique 
$  g( u )\in \Sigma $ such that
\begin{equation}
\label{E:gdef}
 \omega(g(u)^{-1} \cdot u -  \eta, X \cdot \eta ) = 0 \quad \forall
 X \in {\mathfrak g } \,.
\end{equation}
Moreover, the map $ u  \mapsto g( u )$ is in 
$C^1(U_{\Sigma,\delta},\Sigma)$. 
\end{lem}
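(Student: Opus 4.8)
The plan is to set up the defining relation \eqref{E:gdef} as the vanishing of a $ C^1 $ map between finite-dimensional objects (with $ H^1 $ appearing only as a parameter) and to invert it via the implicit function theorem. Concretely, identify $ G/\ZZ $ with $ \RR\times\RR\times S^1\times\RR_+ $ and, for $ u \in H^1 $ and $ g \in G/\ZZ $, define
\[
 \Phi ( g , u ) \defeq \big( \omega ( g^{-1}\cdot u - \eta , e_j \cdot \eta ) \big)_{j=1}^4 \in \RR^4 \,.
\]
Because the $ G $-action \eqref{eq:repG} is smooth on $ H^1 $ and $ e_j \cdot \eta \in {\mathcal S}(\RR) \subset H^1 $, the map $ \Phi $ is $ C^1 $ in $ g $ and (being affine-linear in $ u $ after applying the bounded operator $ u \mapsto g^{-1}\cdot u $) is $ C^1 $, indeed smooth, in $ u $. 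For a fixed $ g_0 \in \Sigma $ the point $ u = g_0\cdot\eta $ satisfies $ \Phi ( g_0 , g_0\cdot\eta ) = 0 $, since then $ g_0^{-1}\cdot u - \eta = 0 $. This is the base point around which I would apply the implicit function theorem.

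The key computation is the invertibility of $ \partial_g \Phi ( g_0 , g_0\cdot\eta ) $. Using Lemma \ref{l:stan} (or directly differentiating $ g \mapsto g^{-1}\cdot u $ along the group) one finds that, at $ u = g_0\cdot\eta $, differentiating $ g^{-1}\cdot u - \eta $ in the direction corresponding to $ X \in {\mathfrak g} $ produces $ -X\cdot\eta $ (up to the conformal factor $ \mu(g_0) $ coming from \eqref{eq:conf}, which is harmless since it is a nonzero scalar). Hence the differential of $ \Phi $ in $ g $ is, up to the factor $ \mu(g_0)^{-1} $, the matrix
\[
 \big( \omega ( e_i \cdot \eta , e_j \cdot \eta ) \big)_{i,j=1}^4 \,,
\]
which is precisely the Gram matrix of $ \omega\rest_M $ at $ \eta $ computed in \eqref{eq:omij}: it is the nondegenerate skew form $ dv\wedge da + d\gamma\wedge d\mu $, hence invertible. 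Conformal invariance \eqref{eq:conf} transports this nondegeneracy from $ \eta $ to every $ g_0\cdot\eta $. The implicit function theorem then gives, for each $ g_0 \in \Sigma $, neighborhoods and a unique $ C^1 $ solution map $ u \mapsto g(u) $ with $ \Phi ( g(u) , u ) = 0 $ near $ ( g_0 , g_0\cdot\eta ) $.

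Finally I would upgrade this local statement to the uniform one in the lemma by a compactness argument. For each $ g_0 $ in the compact set $ \overline{\Sigma} $ (take a slightly larger compact set $ \Sigma' \Supset \Sigma $ and work there), the implicit function theorem yields radii $ r(g_0) , \rho(g_0) > 0 $ such that the solution map is defined and unique for $ \| u - g_0\cdot\eta\|_{H^1} < \rho(g_0) $ with values in the $ r(g_0) $-ball around $ g_0 $; cover $ \overline{\Sigma} $ by finitely many of the balls of radius $ \rho(g_0)/2 $ and let $ \delta_0(\Sigma) $ be a Lebesgue number of this cover (also shrunk so that the target balls stay inside $ \Sigma' $). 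Then any $ u \in U_{\Sigma,\delta} $ with $ \delta \le \delta_0 $ lies within $ \rho(g_0) $ of some $ g_0\cdot\eta $, giving existence; uniqueness of $ g(u) \in \Sigma $ follows because any two candidate solutions both lie in the small target ball where the local implicit function theorem already guarantees uniqueness, and the $ C^1 $ regularity is inherited from the local maps since they agree on overlaps. The main obstacle is purely bookkeeping: verifying that $ \partial_g \Phi $ really reduces to the Gram matrix \eqref{eq:omij} — i.e. correctly tracking the left-versus-right differentiation of $ g^{-1}\cdot u $ and the conformal factor — and then patching the local inverses into a single uniform one; a cleaner alternative, if one prefers, is to cite \cite[Proposition 5.1]{FrSi} directly, of which this is the special case $ N = M $.
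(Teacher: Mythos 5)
Your proposal is correct and follows essentially the same route as the paper: both set up the orthogonality conditions as a map from $H^1\times G$ to $\mathfrak g^*\simeq\RR^4$, apply the implicit function theorem at the points $(g_0\cdot\eta,g_0)$, and identify the relevant $g$-derivative with the nondegenerate form $(\omega\rest_M)_\eta$ computed in \eqref{eq:omij} (the paper reduces to $g_0=e$ by the group action where you invoke conformality, an equivalent step). The only difference is that you spell out the compactness/patching argument giving the uniform $\delta_0(\Sigma)$ and $C^1$ dependence, which the paper leaves implicit (deferring to \cite[Proposition 5.1]{FrSi}).
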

\begin{proof}
We define the transformation
\[ F \; : \; H^1 ( \RR , \CC ) \times G \; \longrightarrow \; 
{\mathfrak g}^* \,, \ \ [ F ( u , h ) ]( X)  \defeq 
\omega ( h \cdot u - \eta , X \cdot \eta ) \,,\]
and want to solve 
$ F ( u , h ) = 0 $ for $ h = h ( u ) $. By the implicit
fuction theorem that follows for $ u $  near $ G \cdot \eta $ if for any 
$ g_0 \in G $ the linear transformation
\[  d_h F ( g_0 \cdot \eta , g_0 ) \; : \; T_{g_0} G \longrightarrow {\mathfrak
g}^* \,, \]
is invertible. Clearly we only need to check it for $ g_0 = e$, that is
that 
$  d_h F ( \eta , e ) \; : \; {\mathfrak g}  \rightarrow {\mathfrak
g}^* \,, $ 
is invertible. But as an element of 
$ {\mathfrak g}^* \otimes {\mathfrak g}^* $, 
$ d_h F( \eta , e ) = (\omega\rest_M)_\eta $, which is nondegenerate.
\end{proof}

From \S\S \ref{s:sy} and \ref{s:GPH} we recall that the equation for 
$ u $ \eqref{eq:nls} can be written as 
\begin{equation} 
\label{eq:utH}
\partial_t u  = \Xi_{H_q} (u ) \,, \ \ 
H_q ( u ) \defeq \frac14 \int ( |\partial_x u |^2 - |u|^4 ) dx 
+ \frac12 \int V( x ) | u |^2 dx \,. \end{equation}
Using Lemma \ref{l:FGJS} we define
\begin{equation}
\label{eq:wt}  g ( t ) \defeq g ( u ( t) ) \,, 
 \ \ \tilde u \defeq g(t)^{-1} u (t) \,, \ \ 
w(t) \defeq \tilde u  - \eta \,, 
\end{equation}
and we want to to derive an equation for $ w ( t ) $.

Let
\begin{equation}
\label{eq:albe}
\begin{split}
\alpha = \alpha ( a , \mu ) &\defeq \frac12\int V\Big(\frac{x}{\mu}+a\Big)\eta^2(x)dx 
- \frac12\int V'\Big(\frac{x}{\mu}+a\Big)\frac{x}{\mu}\eta^2(x)\,dx \,, \\
\beta = \beta ( a , \mu ) 
& \defeq \frac{1}{2\mu}\int V'\Big(\frac{x}{\mu}+a\Big)\eta^2(x)\, dx \,. 
\end{split}
\end{equation}
Note that 
dependence on $a$, $\mu$ makes $\alpha$, $\beta$ into
time-dependent parameters. They are however independent of $x$.  
The proper motivation for the choice of $ \alpha $ and $ \beta $ 
will come in Lemma \ref{l:new} below.

Set
\begin{equation}
\label{eq:defX} 
X=(-\dot a+v)\mu e_1 + \Big(-\frac{\dot v}{\mu}-\beta\Big)e_2 
+ \Big(-\dot \gamma +\dot a v -\frac12v^2+\frac12\mu^2-\alpha\Big)e_3
-\frac{\dot \mu}{\mu}e_4 \,, 
\end{equation}
where $ e_j $'s are given by \eqref{eq:liea1}. Let also
\begin{equation}
\label{eq:defN}
\mathcal{N}w = 2|w|^2\eta+\eta w^2+|w|^2w \,. 
\end{equation}
We now have 
\begin{lem}
\label{l:wt}
The equation for $w$, defined by \eqref{eq:wt}, is
$$\partial_t w =
\begin{aligned}[t]
&X\eta +i\Big[-V\Big(\frac{x}{\mu}+a\Big)+\alpha +\beta x\Big]\eta \\
&+Xw+i\Big[-V\Big(\frac{x}{\mu}+a\Big)+\alpha +\beta x\Big]w 
-i\mu^2\mathcal{L}w +i\mu^2\mathcal{N}w\,, 
\end{aligned} $$
where $ X $ is given by \eqref{eq:defX}, $ \mathcal L $ by \eqref{eq:defL},
$ {\mathcal N} $ by \eqref{eq:defN}.
\end{lem}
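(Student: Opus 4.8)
The plan is to substitute $u(t)=g(t)\cdot(\eta+w(t))$ into the Hamiltonian form $\partial_t u=\Xi_{H_q}(u)$ and conjugate back by $g(t)^{-1}$. On the left, Lemma~\ref{l:stan} gives $\partial_t u=g(t)\cdot\big(X_0(\eta+w)+\partial_t w\big)$, where $X_0\in\mathfrak g$ is the element \eqref{eq:lstan} determined by $\dot g=(\dot a,\dot v,\dot\gamma,\dot\mu)$. On the right, each $g(t)$ acts on $H^1$ as a composition of multiplication by a unimodular function, a translation, a Galilean boost and a dilation, hence is $\CC$-linear and can be moved through $\Xi_{H_q}$; the relevant conjugation rules are
\[
 g^{-1}(-i\partial_x)\,g=\mu(-i\partial_x)+v\,,\qquad
 g^{-1}\!\big(|g\cdot\phi|^2\,g\cdot\phi\big)=\mu^2|\phi|^2\phi\,,\qquad
 g^{-1}\!\big(V\cdot(g\cdot\phi)\big)=V\!\big(\tfrac{\bullet}{\mu}+a\big)\phi\,,
\]
(equivalently one may compute $g^{*}H_q$ and apply Lemma~\ref{l:gen1} with the conformal factor \eqref{eq:conf}). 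Since, by \eqref{eq:Hflow}, $\Xi_{H_q}(u)=\tfrac1i\big(-\tfrac12\partial_x^2u-|u|^2u+Vu\big)$, the equation after conjugation reads, with $\tilde u=\eta+w$,
\[
 X_0\tilde u+\partial_t w=\tfrac1i\Big[-\tfrac{\mu^2}{2}\partial_x^2-iv\mu\,\partial_x+\tfrac{v^2}{2}+V\!\big(\tfrac{\bullet}{\mu}+a\big)\Big]\tilde u-\tfrac1i\,\mu^2|\tilde u|^2\tilde u\,.
\]

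The rest is bookkeeping. Expand $|\tilde u|^2\tilde u=\eta^3+(2\eta^2w+\eta^2\bar w)+\mathcal N w$ with $\mathcal N$ exactly \eqref{eq:defN}. For the part in $w$, use $-\tfrac{\mu^2}{2}\partial_x^2w-\mu^2(2\eta^2w+\eta^2\bar w)=\mu^2\mathcal Lw-\tfrac{\mu^2}{2}w$ from the definition \eqref{eq:defL} of $\mathcal L$, so that after the $\tfrac1i$ one gets $-i\mu^2\mathcal Lw$ plus a multiple of $e_3w=iw$; for the part in $\eta$, use $-\tfrac{\mu^2}{2}\eta''-\mu^2\eta^3=-\tfrac{\mu^2}{2}\eta$ from the profile equation $-\tfrac12\eta''-\eta^3+\tfrac12\eta=0$, which leaves only a multiple of $e_3\eta$ and no $\mathcal L\eta$ term. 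Then split off from the potential a constant and a linear-in-$x$ piece,
\[
 \tfrac1i\,V\!\big(\tfrac{\bullet}{\mu}+a\big)\phi
 = i\Big[-V\!\big(\tfrac{\bullet}{\mu}+a\big)+\alpha+\beta x\Big]\phi-\alpha\,e_3\phi-\beta\,e_2\phi\,,
\]
noting that the identity of this lemma holds for \emph{any} $x$-independent $\alpha,\beta$ (the particular choice \eqref{eq:albe} is forced only later, in Lemma~\ref{l:new}). Finally, move $X_0\tilde u$ to the right and read off, on $\eta$ and on $w$ separately, the coefficients of $e_1,\dots,e_4$: the $e_1$-coefficient is $(-\dot a+v)\mu$ (the $v\mu$ coming from the $-iv\mu\,\partial_x$ term), the $e_4$-coefficient is $-\dot\mu/\mu$, the $e_2$-coefficient is $-\dot v/\mu-\beta$, and the $e_3$-coefficient is $-\dot\gamma+\dot av-\tfrac12v^2+\tfrac12\mu^2-\alpha$. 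These are precisely the components of $X$ in \eqref{eq:defX}; the same $X$ and the same multiplier $i[-V(\tfrac{\bullet}{\mu}+a)+\alpha+\beta x]$ act on $\eta$ and on $w$, while only $w$ carries the extra $-i\mu^2\mathcal Lw+i\mu^2\mathcal Nw$. This is the claimed identity.

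The only step that is not mere computation is making the above rigorous: Lemma~\ref{l:stan} is stated for Schwartz data, whereas $\eta+w$ is only in $H^1$, and $\Xi_{H_q}$ is unbounded. I expect this to be handled as in \cite{FrSi} and \cite{HZ1}, using the local theory for \eqref{eq:nls} in $H^2$ (or smoother): on such solutions every term above lies in $L^2$, $g(t)$ is $C^1$ by Lemma~\ref{l:FGJS}, and the differentiations are legitimate; the algebra is unaffected. This is the step I would expect to need a little care, but no new ideas.
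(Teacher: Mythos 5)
Your proposal is correct and follows essentially the same route as the paper: substitute $u=g\cdot(\eta+w)$, use Lemma \ref{l:stan} for $\partial_t u$, conjugate the Hamiltonian vector field by $g$ (your direct operator conjugation is the same computation as the paper's pullback $g^*H_q$ plus Lemma \ref{l:gen1}, as you note), expand the cubic term, invoke the profile equation for $\eta$, and absorb the $\alpha,\beta$ and constant terms into $X$ as in \eqref{eq:defX}. The coefficient bookkeeping matches \eqref{eq:defX} exactly, so no gap remains.
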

\begin{proof}
 We compute, by the chain rule
$$\partial_t u = \partial_t (g\cdot(\eta+w)) = g\cdot Y(\eta+w) 
+ g\cdot \partial_t w$$
where 
$$Y = \dot a\mu e_1 + \frac{\dot v}{\mu}e_2 + (\dot \gamma-\dot av)e_3 
+ \frac{\dot \mu}{\mu}e_4 \,.$$
From this, and the fact that the equation for $u$ can be written 
$\partial_t u = \Xi_{H}(u)$, we get
$$\partial_t w = -Y\tilde u + g^{-1}\partial_t u 
= -Y\tilde u +g^{-1}\Xi_H g\tilde u$$
We apply Lemma \ref{l:gen1} to obtain 
$$\partial_t w = -Y\tilde u + \frac{1}{\mu}\Xi_{g^*H} \tilde u$$
We compute
\[ \begin{split}
(g^*H)(\tilde u) = H(g\tilde u) 
= & 
\frac14v^2\mu \|\tilde u\|_{L^2}^2 
+ \frac14\mu^3\|\partial_x\tilde u\|_{L^2}^2 
+\frac12v\mu^2\Im \int \bar{\tilde u}\partial_x \tilde u \, dx \\
\ \ \ \ \  \ \ \ \ \ \ \ & - \frac14\mu^3\|\tilde u\|_{L^4}^4 
+ \frac12\mu \int V\Big(\frac{x}{\mu}+a\Big)|\tilde u(x)|^2 \, dx
\end{split}
\]
Therefore,
$$\Xi_{g^*H}\tilde u = \frac1i\Big( \frac12v^2\mu \tilde u 
- \frac12\mu^3\partial_x^2\tilde u - \mu^3|\tilde u|^2 \tilde u 
+\mu V\Big(\frac{x}\mu + a\Big)\tilde u\Big) - v\mu^2\partial_x \tilde u$$
Substituting and expanding the cubic term,
$$
\partial_t w = 
\begin{aligned}[t]
&-Y(\eta+w) - \frac12iv^2(\eta+w) + \frac12i\mu^2\partial_x^2(\eta+w) 
+i\mu^2[\eta^3+2\eta^2w+\eta^2\bar w \\
&+ 2|w|^2\eta+\eta w^2 + |w|^2w] - iV\Big( \frac{x}{\mu}+a\Big)(\eta+w) 
- v\mu\partial_x(\eta+w)
\end{aligned}
$$
Using that $-\frac12\eta+\frac12\eta''+\eta^3=0$, we obtain
$$
\partial_tw = \begin{aligned}[t]
&-Y\eta - \frac12iv^2\eta+\frac12i\mu^2\eta - iV\Big(\frac{x}{\mu}+a\Big)\eta 
-v\mu\partial_x \eta\\
&-Yw-\frac12iv^2w+\frac12i\mu^2w-i\mu^2\mathcal{L}w+i\mu^2\mathcal{N}w 
-iV\Big(\frac{x}{\mu}+a\Big)w - v\mu\partial_xw
\end{aligned}
$$
Now set
$$X =-Y +v\mu e_1 - \beta e_2 +\Big[-\frac12v^2+\frac12\mu^2-\alpha\Big]e_3$$
From this, we get the claimed equation.
\end{proof}

Let us make some remarks about the lemma. First, note that if $X=0$, 
then 
\[ \dot v = -\beta \mu \,, \ \ \dot \gamma = \dot a v -\frac12 v^2  
+ \frac12\mu^2-\alpha\,, \ \ \dot a =v\, , 
 \ \ b\dot \mu =0 \,, \]
 which are exactly the 
equations of motion of the effective Hamiltonian -- see \S \ref{s:GPH}. 

Second, note that 
the term
$$i\Big[ -V\Big(\frac{x}{\mu}+a\Big) + \alpha  + \beta x\Big]\eta$$
projects symplectically to $0$ (used in the next lemma), in the sense
that 
\begin{equation}
\label{eq:projV}
P \left( i( -V(\bullet/{\mu}+a) + \alpha  + \beta \bullet)\eta \right) = 0 \,, 
\end{equation}
where
$ P  : {\mathcal S}'( \RR, \CC ) \rightarrow {\mathfrak g} $
is defined by the condition that
\[  \omega ( u - P(u)\eta, Y \eta ) = 0\,, \ \  \ \ \forall \, Y \in {\mathfrak g}  \,\]
see \cite[(3.7)]{HZ1} for an explicit expression of $ P $. 
To see \eqref{eq:projV} we use the following simple
\begin{lem}
\label{l:new}
Let $ P $ be the symplectic projection defined above and $ f \in {\mathcal 
S'} ( \RR ; \RR ) $ (that is $ f $ is real valued tempered distribution). 
Then, 
\begin{gather}
\label{eq:ab}
\begin{gathered}
P ( i f ( x) \eta ) =  \alpha e_3 \cdot \eta  + \beta e_2 \cdot   \eta  = 
i \alpha \eta + i \beta x \eta \,,\\
\alpha = \frac 12 \int f ( x ) \eta^2 ( x ) - \frac12\int f'( x) x \eta^2 ( x )  dx
\,, \ \ \beta = \frac12 \int f' ( x ) \eta^2 ( x ) dx \,. 
\end{gathered}
\end{gather}
\end{lem}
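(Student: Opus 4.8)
The plan is to compute $P(if\eta)$ straight from its defining property: $P(if\eta)\eta$ is the unique element of $T_\eta M = {\mathfrak g}\cdot\eta$ for which $if\eta - P(if\eta)\eta$ is symplectically orthogonal to all of $T_\eta M$. Using the infinitesimal representation \eqref{eq:liea1} we have $e_1\eta = -\eta'$, $e_2\eta = ix\eta$, $e_3\eta = i\eta$ and $e_4\eta = \partial_x(x\eta) = \eta + x\eta'$, so writing $P(if\eta)\eta = \sum_{j=1}^4 c_j\, e_j\eta$ the orthogonality condition $\omega(if\eta - P(if\eta)\eta,\, e_j\eta) = 0$ turns into the linear system
\[ \sum_{k=1}^4 c_k\,(\omega\rest_M)_\eta(e_k,e_j) = b_j\,, \qquad b_j \defeq \omega(if\eta,\, e_j\eta)\,, \qquad j = 1,\dots,4\,. \]
Each $b_j$ is well defined since $if\eta \in {\mathcal S}'(\RR,\CC)$ is paired against the Schwartz function $e_j\eta$, and the coefficient matrix is the Gram matrix of $\omega\rest_M$ at $\eta$, which by \eqref{eq:omij} has only the nonzero entries $(\omega\rest_M)_\eta(e_2,e_1) = (\omega\rest_M)_\eta(e_3,e_4) = 1$ and their antisymmetric partners; in particular it is block-diagonal and invertible.

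Next I would evaluate the four pairings. Because $f$ is real-valued and $\eta$ is real, $\omega(if\eta, v) = \Im\int if\eta\,\bar v = \Re\int f\eta\,\bar v$, which vanishes whenever $v$ is $i$ times a real function; this gives $b_2 = \Re\bigl(-i\!\int f x\eta^2\bigr) = 0$ and $b_3 = \Re\bigl(-i\!\int f\eta^2\bigr) = 0$. For the remaining two, a single integration by parts — legitimate because $\eta^2$ and $x\eta^2$ lie in ${\mathcal S}(\RR)$, so the duality pairings with $f\in{\mathcal S}'$ are unambiguous — yields
\[ b_1 = -\!\int f\eta\eta' = -\tfrac12\!\int f(\eta^2)' = \tfrac12\!\int f'\eta^2\,, \qquad b_4 = \int f\eta^2 + \tfrac12\!\int f\,x(\eta^2)' = \tfrac12\!\int f\eta^2 - \tfrac12\!\int f'\,x\eta^2\,, \]
where in the last step I used $x(\eta^2)' = (x\eta^2)' - \eta^2$. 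These are precisely $\beta$ and $\alpha$ as defined in the statement.

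Finally I would solve the block-diagonal system: the $j=2$ and $j=3$ equations force $c_1 = c_4 = 0$, while the $j=1$ and $j=4$ equations give $c_2 = \beta$ and $c_3 = \alpha$. Hence $P(if\eta)\eta = \beta\, e_2\eta + \alpha\, e_3\eta = i\beta x\eta + i\alpha\eta$, which is the assertion. I do not expect a real obstacle here: the whole content is the short computation of the four symplectic pairings, and the only delicate point — that $f$ need only be a tempered distribution — is harmless, since every function it is tested against, namely $\eta\eta'$, $\eta^2$ and $x\eta^2$, belongs to the Schwartz class. Applying the lemma with $f = -V(\bullet/\mu + a) + \alpha + \beta\bullet$ then gives \eqref{eq:projV} once one checks that, for the $\alpha,\beta$ of \eqref{eq:albe}, the corresponding constants cancel.
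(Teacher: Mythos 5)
Your proposal is correct and is exactly the calculation the paper has in mind: the paper's proof consists of the one-line remark that the lemma ``follows from a straightforward calculation based on \eqref{eq:omij}'', and your argument simply carries that calculation out, expanding $P(if\eta)\cdot\eta$ in the basis $e_j\cdot\eta$, evaluating the four pairings $\omega(if\eta,e_j\cdot\eta)$ (with the distributional pairings justified by $\eta^2,\,x\eta^2,\,\eta\eta'\in\mathcal{S}$), and inverting the block-diagonal Gram matrix from \eqref{eq:omij}. The resulting values $c_1=c_4=0$, $c_2=\beta$, $c_3=\alpha$ match the statement, so there is nothing to add.
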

\begin{proof}
This follows from a straightforward calculation based on 
\eqref{eq:omij}.
If $ f ( x) = V ( x/\mu + a ) $, we obtain \eqref{eq:projV} with 
$ \alpha $ and $ \beta $ given by \eqref{eq:ab}.
\end{proof}

Finally, note the following Taylor 
expansions:
\begin{align*}
V\Big(\frac{x}{\mu}+a\Big) &= V(a) + V'(a)\frac{x}{\mu} 
+ V''(a)\frac{x^2}{2\mu^2}+\mathcal{O}(h^3) \,, \\
\alpha &= V(a) -\frac{V''(a)\pi^2}{24\mu^2}+\mathcal{O}(h^3) \,, \\
\beta &= \frac{V'(a)}{\mu} + \mathcal{O}(h^3)\,, 
\end{align*}
and thus
\begin{equation}
\label{eq:thus}
-V\Big(\frac{x}{\mu}+a\Big) + \alpha + \beta x 
= -\frac{V''(a)\pi^2}{24\mu^2} - \frac{V''(a)}{2\mu^2}x^2 + \mathcal{O}(h^3)
=\mathcal{O}(h^2)\,,
\end{equation}
where all the errors are polynomially bounded in $ x $. If we assumed
that $ W \in {\mathcal C}^4 $ then he expansions for $ \alpha $ would
be valid with an error $ {\mathcal O} ( h^4 ) $.

\begin{lem}
\label{l:X}
Let $ w $ be given by \eqref{eq:wt}, with $g$ obtained from Lemma \ref{l:FGJS}, and $X$ by \eqref{eq:defX}.  Suppose $\frac12\leq \mu \leq 1$.  
Then
$$|X| \leq c(h^2\|w\|_{H^1} + \|w\|_{H^1}^2 + \|w\|_{H^1}^3)$$
\end{lem}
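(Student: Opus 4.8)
The plan is to extract $X$ from the evolution equation of Lemma~\ref{l:wt} by exploiting the symplectic orthogonality of $w(t)$ to $T_\eta M$. Recall that $\omega(w(t),Y\eta)=0$ for all $Y\in\mathfrak{g}$; differentiating this identity in $t$ and using $\partial_t w$ from Lemma~\ref{l:wt} will produce a linear system for the four components of $X$. Concretely, pairing the equation of Lemma~\ref{l:wt} with $Y\eta$ under $\omega$ and using $\omega(w,Y\eta)=0$ (hence also $\omega(\partial_t w, Y\eta) = -\omega(w,\partial_t(Y\eta))$, where $\partial_t(Y\eta)$ only involves $\dot a,\dot v,\dot\gamma,\dot\mu$ and is therefore itself $O(|X|)+O(v)$-controlled once we substitute the definitions), we isolate the leading term $\omega(X\eta,Y\eta)$. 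Since $(\omega\rest_M)_\eta$ is nondegenerate on $\mathfrak{g}$ (this is exactly what made Lemma~\ref{l:FGJS} work), the map $X\mapsto(\omega(X\eta,Y\eta))_{Y}$ is invertible with inverse of size $O(1)$, so it suffices to bound every other term in the pairing.

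The key observation that kills the ``dangerous'' terms is \eqref{eq:projV}: the inhomogeneous term $i[-V(\cdot/\mu+a)+\alpha+\beta\cdot]\eta$ projects symplectically to $0$, i.e.\ $\omega(i[-V(x/\mu+a)+\alpha+\beta x]\eta, Y\eta)=0$ for all $Y\in\mathfrak g$. This is precisely why $\alpha,\beta$ were chosen via Lemma~\ref{l:new}. So after pairing, this potentially $O(h^2)$-sized (but not smaller) term contributes nothing. What remains on the right side, after moving $X\eta$ to the left, is: (i) the term $i[-V(x/\mu+a)+\alpha+\beta x]w$, which by \eqref{eq:thus} has coefficient $O(h^2)$ with polynomially-bounded-in-$x$ weights, hence pairs against $Y\eta$ (which is Schwartz) to give $O(h^2\|w\|_{H^1})$; (ii) the term $i\mu^2\mathcal{L}w$, which by Lemma~\ref{l:gen2} applied to $N=M$, $\rho_0=\eta$ satisfies $(1/i)\mathcal L(T_\eta M^{\perp_\omega})\subset T_\eta M^{\perp_\omega}$, so $\omega(i\mu^2\mathcal{L}w, Y\eta) = -\mu^2\omega((1/i)\mathcal{L}w, Y\eta)=0$ since $w\perp_\omega T_\eta M$ forces $(1/i)\mathcal Lw\perp_\omega T_\eta M$ too — this term drops out entirely; (iii) the quadratic/cubic term $i\mu^2\mathcal{N}w = i\mu^2(2|w|^2\eta+\eta w^2+|w|^2w)$, which pairs against the Schwartz function $Y\eta$ to give $O(\|w\|_{H^1}^2+\|w\|_{H^1}^3)$ by Sobolev embedding $H^1(\mathbb R)\hookrightarrow L^\infty$; (iv) the term $Xw$, giving $O(|X|\,\|w\|_{H^1})$, which for $\|w\|_{H^1}$ small is absorbed into the left side.

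Assembling: we get $|X|\leq c(h^2\|w\|_{H^1}+\|w\|_{H^1}^2+\|w\|_{H^1}^3) + c\|w\|_{H^1}|X| + (\text{terms from }\partial_t(Y\eta))$. The $\partial_t(Y\eta)$ contribution needs care — it involves $\dot a,\dot v,\dot\gamma,\dot\mu$, but from \eqref{eq:defX} these are $v+O(|X|/\mu)$, $-\beta\mu + O(|X|\mu)$, etc., and $\beta=O(h^2)$ by the expansion after \eqref{eq:albe}; moreover the $v$-part of $\dot a$ and the $\alpha,\beta$-parts conspire (again via the choice in Lemma~\ref{l:new}) to pair trivially, so what survives is genuinely $O(|X|\,\|w\|_{H^1})+O(h^2\|w\|_{H^1})$. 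Hence for $\|w\|_{H^1}$ below a fixed threshold (which we may assume, shrinking $h_0$) the $c\|w\|_{H^1}|X|$ term is absorbed, yielding the claim. The main obstacle is the careful bookkeeping in step (iv) and the $\partial_t(Y\eta)$ terms: one must verify that every occurrence of $X$ on the right genuinely comes multiplied by a small factor ($\|w\|_{H^1}$ or $h^2$), with no ``naked'' $X$ — this is where the precise algebraic form of $X$ in \eqref{eq:defX}, matched against $Y$ in the proof of Lemma~\ref{l:wt}, does the work, and it is worth checking that the hypothesis $\tfrac12\le\mu\le1$ is exactly what keeps all the $\mu$-dependent constants uniform.
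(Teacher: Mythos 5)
Your argument is essentially the paper's proof: pairing the equation of Lemma \ref{l:wt} with the vectors $Y\cdot\eta$, $Y\in\mathfrak g$ (equivalently, applying the symplectic projection $P$), killing the inhomogeneous potential term by \eqref{eq:projV}, killing the $i\mathcal{L}w$ term via Lemma \ref{l:gen2}, bounding the $\bigl[-V(x/\mu+a)+\alpha+\beta x\bigr]w$ term by $O(h^2)\|w\|_{L^2}$ through \eqref{eq:thus}, treating $i\mu^2\mathcal{N}w$ by Sobolev embedding, and absorbing the $|X|\,\|w\|$ contribution using the smallness of $\|w\|$ coming from Lemma \ref{l:FGJS}; the nondegeneracy of $(\omega\rest_M)_\eta$ plays exactly the role you assign it. The one correction concerns what you single out as the main obstacle: in the reparametrized frame the orthogonality \eqref{E:gdef} is to the \emph{time-independent} functions $Y\cdot\eta$ (with $\eta=\sech$ fixed and $Y\in\mathfrak g$ fixed), so $\partial_t(Y\eta)=0$ and $\omega(\partial_t w,Y\eta)=\partial_t\,\omega(w,Y\eta)=0$ identically; there are no extra terms involving $\dot a,\dot v,\dot\gamma,\dot\mu$ that need to ``conspire'', and the paper records this simply as $P\partial_t w=\partial_t(Pw)=0$. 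Such terms would arise only if one imposed orthogonality to the moving tangent space $T_{g(t)\cdot\eta}M$, which is not the decomposition used here; in that setting your unproved cancellation claim would indeed be a gap, since $\dot a\approx v$ is of size $O(1)$, but as written the quantity you are bounding is simply zero.
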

\begin{proof}
We use the 
symplectic orthogonality of $ Y \eta $, $ Y \in {\mathfrak g} $
and $ w $.

Since $ P w_t = \partial_t P w = 0 $, Lemma \ref{l:wt} gives
\[ \begin{split} X & = 
P \left(i(V(\bullet/{\mu}+a)- \alpha - \beta \bullet ) \eta \right) 
 + P(i( V(\bullet/{\mu}+a ) -\alpha  - \beta \bullet )w ) - P(Xw)
\\ & \ \ \ \ \ +\,  \mu^2P ( i \mathcal{N}w )  + 
\mu^2 P ( i\mathcal{L}w ) \,.
\end{split}
\] 
We recall from \cite[Lemma 3.3]{HZ1} the following straightforward
estimates:
\begin{equation}
\label{eq:estP}
\| P ( Y w ) \| \leq C | Y | \|w \|_{L^2} \,, \ \
 \| P ( i \mathcal{N} u ) \| \leq C \|w\|_{L^2}^2 \left( 1 +
\| w \|_{H^1}^{\frac12} \| w \|_{L^2}^{\frac12} \right) \,. 
\end{equation}

We already observed in \eqref{eq:projV} 
that the first term on the right hand side
vanishes\footnote{This is the significant bonus of using the
effective Hamiltonian.}. From \eqref{eq:thus}
we see that the second term is $ {\mathcal O} ( h^2 ) \| w \|_{L^2} $. 
The third and fourth term are estimated using \eqref{eq:estP} by 
\[ C (  |X| \| w \|_{L^2} + \| w \|_{H^1}^2 + \| w \|_{H^1}^3 ) \,.\]
The last term vanishes: 
the linear operator $ \mathcal{L}$ is the Hessian of $ \mathcal{E} $, given in
\eqref{eq:defE}, at the
critical point $ \eta $. The fact that
 $ \Xi_{\mathcal E} $ is tangent to $ M$ and
Lemma \ref{l:gen2} (or a direct computation) show that
\[  P ( i \mathcal{L} w ) = 0 \,, \]
Summarizing,
\[ |X| \leq C \| w \|_{L^2} | X | + C ( h^2\| w \|_{H^1} +
\| w \|_{H^1}^2 + \| w \|_{H^1}^3 ) \,.\] 
The smallness of $ \| w \|_{L^2} $ concludes the proof.
\end{proof}


We conclude this section which two lemmas which effectively 
eliminate $ \mu $ from the coefficients of $ X$.

\begin{lem}
\label{l:gath}
Suppose that $ w \in H^1 ( \RR, \CC ) $ and that 
$ \omega ( w , X \cdot \eta ) = 0 $,
for every $ X \in {\mathfrak g}$. 
Then 
\begin{equation}
\label{eq:lga1}
\|w\|_{L^2}^2 = 2  (1 - \mu ) / \mu \,. \ \ 
\end{equation}
\end{lem}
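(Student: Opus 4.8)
The plan is to combine three ingredients: conservation of the $L^2$ mass along the flow \eqref{eq:nls}, the conformal scaling of the $L^2$ norm under the $G$-action \eqref{eq:repG}, and a single orthogonality relation, namely the one coming from the central generator $e_3$ of the Lie algebra.

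First I would recall that here $w = w(t)$ and $\mu = \mu(t)$ are the quantities from \eqref{eq:wt}, so that $u(t) = g(t)\cdot(\eta + w(t))$ is the solution of \eqref{eq:nls}. Since $V = W(h\,\bullet)$ is real, the $L^2$ mass is conserved, whence
\[ \|u(t)\|_{L^2}^2 = \|u(0)\|_{L^2}^2 = \int \sech^2 = 2 \,. \]
On the other hand, a direct change of variables in \eqref{eq:repG} (already used implicitly in the proof of Lemma \ref{l:wt}) gives $\|g\cdot v\|_{L^2}^2 = \mu\,\|v\|_{L^2}^2$ for every $v\in H^1$; applied to $v = \eta + w$ this yields $2 = \mu\,\|\eta + w\|_{L^2}^2$. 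Expanding the square and using $\|\eta\|_{L^2}^2 = 2$, this reads
\[ 2 = \mu\bigl(2 + 2\langle \eta, w\rangle + \|w\|_{L^2}^2\bigr)\,, \]
with $\langle\,\cdot\,,\,\cdot\,\rangle$ the real inner product of \eqref{eq:omega}.

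It remains to see that $\langle\eta, w\rangle = 0$, and this is the only place the hypothesis is used — and only through the central generator. Taking $X = e_3 = i$ in $\omega(w, X\cdot\eta) = 0$, and using that $\eta$ is real-valued,
\[ 0 = \omega(w, i\eta) = \Im\int w\,\overline{i\eta} = -\Re\int w\,\eta = -\langle\eta, w\rangle \,. \]
Substituting $\langle\eta,w\rangle = 0$ into the previous display gives $2 = \mu(2 + \|w\|_{L^2}^2)$, i.e. $\|w\|_{L^2}^2 = 2(1-\mu)/\mu$, which is \eqref{eq:lga1}.

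I do not expect any real obstacle here: the computation is entirely routine. The only point worth isolating is that, because $\eta$ is real and $e_3$ acts by multiplication by $i$, symplectic orthogonality of $w$ to $e_3\cdot\eta$ is literally ordinary $L^2$-orthogonality of $w$ to $\eta$; the three remaining generators are irrelevant for this identity, and the entire content of the lemma is the bookkeeping of the conformal factor $\mu$ from \eqref{eq:conf} against conservation of mass.
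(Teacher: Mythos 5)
Your proof is correct and follows essentially the same route as the paper: conservation of the $L^2$ norm combined with the scaling $\|g\cdot v\|_{L^2}^2=\mu\|v\|_{L^2}^2$ gives $\|\eta+w\|_{L^2}^2=2/\mu$, and the symplectic orthogonality (via $e_3=i$, since $\eta$ is real) gives $\Re\langle w,\eta\rangle=0$, which is exactly the paper's argument. Your added remark that only the central generator is used is a fine observation but does not change the substance.
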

\begin{proof}
We first compute
\[ \| \eta + w \|_{L^2}^2 = \| g^{-1} u  \|_{L^2}^2  = 
 \| u  \|_{L^2}^2 / { \mu(g)} = 
{2}/{ \mu(g)} \,,\]
where we used the conservation of the $ L^2 $ norm. By the 
symplectic orthogonality assumption 
statement of the lemma $ \Re \la w , \eta \ra = 0 $ and hence
\[ \| \eta + w \|_{L^2}^2  = 2 + \| w \|_{L^2}^2 \,, \]
from which the conclusion follows. 
\end{proof}

From this we immediately deduce the following
\begin{lem}
\label{C:nomus}
Suppose $1-\mu \ll 1$ and $0 < h \leq 1$.  Let $ X_0 = X |_{\mu = 1} $
where $ X $ is given by \eqref{eq:defX}:
\begin{equation}
\label{eq:defX0}
X_0 \defeq (-\dot a+v) e_1 + \Big(-{\dot v}-\beta\Big)e_2 
+ \Big(-\dot \gamma +\dot a v -\frac12v^2+\frac12-\alpha\Big)e_3
- {\dot \mu} e_4 \,, 
\end{equation}
where $ \alpha $ and $ \beta $ are given by \eqref{eq:albe} (and 
depend on $ \mu $). The the conclusions of Lemma \ref{l:X} hold
for $ X_0$:
$$|X_0 | \leq c(h^2\|w\|_{H^1} + \|w\|_{H^1}^2 + \|w\|_{H^1}^3) \,. $$
\end{lem}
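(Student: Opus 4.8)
The plan is to deduce this from Lemma \ref{l:X} by checking that $X-X_0$ is of strictly higher order than the right-hand side. Subtracting \eqref{eq:defX0} from \eqref{eq:defX} and recalling that $\alpha,\beta$ occur identically in both, the $e_1$-, $e_2$-, $e_3$- and $e_4$-components of $X-X_0$ are, respectively,
$$(-\dot a+v)(\mu-1)\,,\qquad \dot v\,\frac{\mu-1}{\mu}\,,\qquad \tfrac12(\mu^2-1)\,,\qquad \dot\mu\,\frac{\mu-1}{\mu}\,.$$
So the whole statement comes down to bounding $|1-\mu|$ together with the coefficients $-\dot a+v$, $\dot v$ and $\dot\mu$.

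First I would invoke Lemma \ref{l:gath}: the orthogonality $\omega(w,X\eta)=0$ holds by the construction of $w$ in \eqref{eq:wt} via Lemma \ref{l:FGJS}, so \eqref{eq:lga1} gives $\|w\|_{L^2}^2=2(1-\mu)/\mu$; together with the hypothesis $1-\mu\ll1$ (which forces $\tfrac12\le\mu\le1$, so that Lemma \ref{l:X} applies) this yields
$$|1-\mu|=\tfrac{\mu}{2}\|w\|_{L^2}^2\le\|w\|_{L^2}^2\le\|w\|_{H^1}^2\,.$$
Next I would read the three coefficients directly off \eqref{eq:defX} for $X$ itself, whose $e_1$-, $e_2$- and $e_4$-components are $(-\dot a+v)\mu$, $-\dot v/\mu-\beta$ and $-\dot\mu/\mu$; using $\mu\ge\tfrac12$ this gives $|-\dot a+v|\le 2|X|$, $|\dot v|\le|X|+|\beta|$ and $|\dot\mu|\le|X|$. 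By Lemma \ref{l:X}, and the standing smallness of $\|w\|_{H^1}$ coming from Lemma \ref{l:FGJS}, we have $|X|\le C$, and $|\beta|\le Ch\le C$ follows from the definition \eqref{eq:albe} of $\beta$ together with $V'=hW'(h\,\bullet)$ and $\int\eta^2=2$.

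Combining the two steps, each component of $X-X_0$ is a bounded quantity times $\mu-1$ or $\mu^2-1$, so
$$|X-X_0|\le C\big(|X|+|\beta|+1\big)\,|1-\mu|\le C\|w\|_{H^1}^2\,,$$
and therefore, by Lemma \ref{l:X} once more,
$$|X_0|\le|X|+|X-X_0|\le c\big(h^2\|w\|_{H^1}+\|w\|_{H^1}^2+\|w\|_{H^1}^3\big)$$
after relabelling the constant. I do not expect a real obstacle here; the only point requiring a little care is that the proof of Lemma \ref{l:X} is used twice (to bound $|X|$ and then to conclude) and that the extra $\|w\|_{H^1}^2$ produced by the $\mu\to1$ replacement be absorbed on the right, which it is since it has precisely the same form.
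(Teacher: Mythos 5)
Your proof is correct and follows exactly the route the paper intends: the paper states the lemma as an immediate consequence of Lemma \ref{l:gath}, and you carry out precisely that deduction, using $\|w\|_{L^2}^2 = 2(1-\mu)/\mu$ to get $|1-\mu|\leq \|w\|_{H^1}^2$ and then absorbing the componentwise difference $X-X_0$ (whose coefficients $-\dot a+v$, $\dot v$, $\dot\mu$ are bounded via Lemma \ref{l:X} and $|\beta|\leq Ch$) into the $\|w\|_{H^1}^2$ term.
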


\section{Spectral estimates}
\label{se}


In this section we will recall the now standard estimates on the
operator $ {\mathcal L} $ which arises as Hessian of $ {\mathcal E} $ 
at $ \eta $:
\[ {\mathcal L} w = - \frac12 \partial_x^2 w - 2 \eta^2 w - \eta^2 \bar w 
+ \frac12 w \,, \]
or 
\[ {\mathcal L} w = \begin{bmatrix} L_+ &  0 \\  0 & L_- \end{bmatrix} 
\begin{bmatrix} \Re w \\ \Im w \end{bmatrix} \,, 
 \ \ L_\pm = - \frac 12 \partial_x^2 - ( 2 \pm 1 ) \eta^2 + \frac12 \,.
\]
In our special case we can be more precise than in the general case
(see \cite{We}, and also \cite[Appendix D]{FrSi}). The 
self-adjoint operators
$ L_\pm $ belong to the class of Schr\"odinger operators with 
{\em P\"oschl-Teller} potentials
and their spectra can be explicitly computed:
\[ \sigma( L_- ) = \{ 0 \} \cup [1/2 , \infty ) \,, \ \
\sigma(L_+) = \{ 0, - 3/2\} \cup [1/2, \infty ) \,. \] 
The eigenfuctions can computed by the same method but a straightforward
verification is sufficient to see that 
\[ L_- \eta = 0 \,, \ \ L_+ (\partial_x \eta) = 0 \,, \ \ 
L_+ (\eta^2) = - \frac32 \eta^2 \,.\]

From \S \cite[\S 4]{HZ1} we recall the following
\begin{prop}
\label{p:coer}
Let $ w \in H^1 ( \RR, \CC ) $ and 
suppose that for any $ X \in {\mathfrak g}$,
$ \omega ( w , X \cdot \eta ) = 0 $.
Then, 
\begin{equation}
\label{eq:coer}
\begin{split}
 \langle {\mathcal L} w , w \rangle  \geq 
 \frac{2 \rho_0  } { 7 + 2 \rho_0 }  \| w \|^2_{H^1} 
\simeq 0.0555 \| w \|_{H^1}^2 \,, \ \ \rho_0 = \frac{ 9 }{ 2 ( 12 + \pi^2 )} \,.
\end{split}
\end{equation}
\end{prop}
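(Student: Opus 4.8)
The plan is to reduce the coercivity estimate \eqref{eq:coer} to the spectral facts just listed for $L_\pm$ and a finite set of orthogonality conditions, following the standard Weinstein-type argument. Writing $w = p + iq$ with $p = \Re w$, $q = \Im w$, we have $\langle {\mathcal L} w, w\rangle = \langle L_+ p, p\rangle + \langle L_- q, q\rangle$, so the real and imaginary parts decouple and may be treated separately. The symplectic orthogonality $\omega(w, X\cdot\eta) = 0$ for all $X \in {\mathfrak g}$, unwound via \eqref{eq:liea1} and the basic integrals $\int \eta^2 = 2$, $\int \partial_x\eta \cdot x\eta\, dx = -1$, translates into four scalar conditions: two constraining $p$ (orthogonality to $\eta^2$ and to $\partial_x\eta$, coming from the $e_3$ and $e_1$ directions) and two constraining $q$ (orthogonality to $\eta$ and to $x\eta$, from $e_4$ and $e_2$). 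The point is that $\partial_x\eta$ spans $\ker L_+$ and $\eta$ spans $\ker L_-$, so on the constrained subspaces the zero modes are removed.

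Next I would establish the strict positivity of each quadratic form on its constrained subspace. For $L_-$: since $\sigma(L_-) = \{0\}\cup[1/2,\infty)$ with the zero eigenvalue simple and spanned by $\eta$, and $q \perp \eta$, one immediately gets $\langle L_- q, q\rangle \geq \tfrac12\|q\|_{L^2}^2$ for all $q$ in the constrained space — the extra orthogonality to $x\eta$ is not even needed here. For $L_+$: the subtlety is that $L_+$ has a negative eigenvalue $-3/2$ with eigenfunction proportional to $\eta^2$, in addition to the zero mode $\partial_x\eta$. Killing $\partial_x\eta$ removes the kernel but not the negative direction; however, the second orthogonality condition $p \perp \eta^2$ kills precisely the negative eigenfunction. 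So on $\{p : p\perp\eta^2,\ p\perp\partial_x\eta\}$ one has $\langle L_+ p, p\rangle > 0$, and by the spectral gap below $1/2$ (the only spectrum in $(-3/2,1/2)$ being the removed eigenvalues $-3/2$ and $0$) one gets $\langle L_+ p, p\rangle \geq c_+\|p\|_{L^2}^2$ for an explicit $c_+ > 0$. The explicit constant $\rho_0 = 9/(2(12+\pi^2))$ appearing in the statement should come out of quantifying this $L_+$ estimate — most likely via a variational computation using the explicit P\"oschl–Teller eigenfunctions (the $\pi^2$ is the signature of $\int x^2\eta^2 = \pi^2/6$ entering the projection onto $x\eta$), so the honest work is bookkeeping the Rayleigh quotient on the two-codimensional subspace.

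Finally I would upgrade the $L^2$ coercivity to the $H^1$ bound claimed in \eqref{eq:coer}. This is a routine interpolation: from $\langle {\mathcal L} w, w\rangle \geq \rho_0\|w\|_{L^2}^2$ on the constrained subspace and the definition ${\mathcal L} w = -\tfrac12\partial_x^2 w - 2\eta^2 w - \eta^2\bar w + \tfrac12 w$, one writes $\tfrac12\|\partial_x w\|_{L^2}^2 = \langle {\mathcal L} w, w\rangle + \langle(2\eta^2 + \ldots)w, w\rangle - \tfrac12\|w\|_{L^2}^2 \leq \langle {\mathcal L} w, w\rangle + \tfrac72\|w\|_{L^2}^2$ using $\|\eta\|_{L^\infty}^2 = 1$, and then combines with the $L^2$ bound using a convex-combination trick to get $\|w\|_{H^1}^2 = \|w\|_{L^2}^2 + \|\partial_x w\|_{L^2}^2 \leq \tfrac{7 + 2\rho_0}{2\rho_0}\langle {\mathcal L} w, w\rangle$. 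Since the bulk of this argument is carried out in \cite[\S 4]{HZ1}, the proof here can legitimately cite that reference for the details; the main obstacle, were one to do it from scratch, is the sharp tracking of constants in the $L_+$ estimate, since the final constant is stated explicitly. I expect the author's proof to simply invoke \cite{HZ1}.
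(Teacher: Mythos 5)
Your expectation about the paper is correct: Proposition \ref{p:coer} is not proved here but simply recalled from \cite[\S 4]{HZ1}. However, your sketch of the underlying argument has a genuine error at its central step: the translation of the symplectic orthogonality into conditions on $p=\Re w$ and $q=\Im w$ is backwards, and one of your constraint functions ($\eta^2$) does not occur at all. Since $\omega(w,v)=\Im\int w\bar v$, orthogonality to the \emph{real} tangent directions $e_1\cdot\eta=-\partial_x\eta$ and $e_4\cdot\eta=(x\eta)'$ constrains the \emph{imaginary} part of $w$, while orthogonality to the imaginary directions $e_3\cdot\eta=i\eta$ and $e_2\cdot\eta=ix\eta$ constrains the \emph{real} part. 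Thus the correct conditions are
\[
p\perp\eta,\quad p\perp x\eta,\qquad q\perp \partial_x\eta,\quad q\perp(x\eta)' ,
\]
not $p\perp\eta^2,\ p\perp\partial_x\eta$ and $q\perp\eta,\ q\perp x\eta$ as you assert.

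This matters because your spectral step rests on the claim that the constraints remove exactly $\ker L_+=\operatorname{span}\{\partial_x\eta\}$, the negative eigenfunction $\eta^2$ of $L_+$, and $\ker L_-=\operatorname{span}\{\eta\}$, after which coercivity with constant $1/2$ would be immediate from the spectral gap. With the correct constraints, $p$ is orthogonal to neither $\eta^2$ nor $\partial_x\eta$, and $q$ is not orthogonal to $\eta$: none of the constraint functions are eigenfunctions of $L_\pm$. The actual proof (carried out in \cite[\S 4]{HZ1}) must therefore argue indirectly: for $L_+$ one uses the classical Weinstein mechanism that $L_+\geq 0$ on $\{\eta\}^\perp$ together with the fact that the second constraint $x\eta$ has nonzero overlap with the kernel, $\int x\eta\,\partial_x\eta\,dx=-1$; for $L_-$ one uses $\int \eta\,(x\eta)'\,dx=1\neq 0$ to see that the constraint $q\perp(x\eta)'$ controls the kernel direction. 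Making these non-spectral constraints quantitative is precisely where $\rho_0=9/(2(12+\pi^2))$ and the factor $\pi^2$ (from $\int x^2\eta^2=\pi^2/6$) enter; indeed, the fact that the stated $L^2$ constant $\rho_0\simeq 0.21$ is strictly below $1/2$ is itself a sign that your version of the argument cannot be the right one. Your decoupling $\la \mathcal{L}w,w\ra=\la L_+p,p\ra+\la L_-q,q\ra$ and the final $H^1$ upgrade leading to the factor $2\rho_0/(7+2\rho_0)$ are fine, but as written the middle step would fail.
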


The next proposition will be useful in solving a linear equation 
for an approximation of $ w $.

\begin{prop}
\label{p:new}
The equation 
\begin{equation}
\label{eq:L+} L_+f= \left( \frac{\pi^2}{12}+x^2 \right)\eta ( x ) 
\end{equation}
has a unique solution in $ L^2 ( \RR ) $. In addition, 
\[  e^{ ( 1 - \epsilon) |x| } f^{(k)} ( x ) \in L^\infty ( \RR ) \,, \ \
\forall \; \epsilon > 0 \,, \ k \in \NN \,, \]
and 
\begin{equation}
\label{eq:orth}
 \omega ( f , X \eta ) = 0 \,, \ \ \forall \; X \in {\mathfrak g}\,. 
\end{equation}
\end{prop}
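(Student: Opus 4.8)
The plan is to analyze the ordinary differential operator $L_+ = -\frac12 \partial_x^2 - 3\eta^2 + \frac12$ on $\RR$ directly, exploiting the fact that we understand its kernel and its bottom eigenfunctions explicitly. The key structural facts are $L_+ \eta' = 0$ (so $\eta' = -\sech x \tanh x$ spans $\kerr L_+$ in $L^2$, since $0$ is a simple eigenvalue with the next-lowest being $-3/2$) and $L_+ (\eta^2) = -\frac32 \eta^2$. First I would check solvability: the right-hand side $g(x) \defeq (\frac{\pi^2}{12} + x^2)\eta(x)$ is even, while $\eta'$ is odd, so $\langle g, \eta'\rangle = 0$ automatically and the Fredholm alternative gives a unique solution $f \in L^2(\RR)$ modulo $\kerr L_+$; pinning down $f$ uniquely requires one more scalar condition, which will come from the symplectic orthogonality \eqref{eq:orth}. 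Concretely, since $\eta'$ is the only element of $\kerr L_+ \cap L^2$, the solution is unique once we impose $\langle f, \eta'\rangle = 0$, and I would show this is equivalent to (or implied by) \eqref{eq:orth}.

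Next I would establish existence of the $L^2$ solution and its decay simultaneously by the reduction-of-order / variation-of-parameters method for the second-order ODE $L_+ f = g$. One homogeneous solution is $\phi_1 = \eta' = -\sech x \tanh x$, which is exponentially decaying; a second, linearly independent homogeneous solution $\phi_2$ is obtained by $\phi_2 = \phi_1 \int^x \phi_1^{-2}$, and since $\phi_1 \sim c e^{-|x|}$, the solution $\phi_2$ grows like $e^{|x|}$. Writing $f$ via the variation-of-parameters formula with kernel built from $\phi_1, \phi_2$ and the Wronskian (which is constant), one gets an explicit integral representation. The particular solution that stays in $L^2$ is the one where the coefficient multiplying the growing mode $\phi_2$ vanishes as $x \to +\infty$; since $g$ decays like $(\frac{\pi^2}{12}+x^2)e^{-|x|}$ and $\phi_1 \sim e^{-|x|}$, the relevant integrals $\int_x^\infty \phi_1 g$ and $\int_x^\infty \phi_2 g$ converge, and a short estimate shows $f(x) = O((1+|x|^N)e^{-|x|})$-type behavior — more precisely $e^{(1-\epsilon)|x|} f(x) \in L^\infty$ for every $\epsilon > 0$, the loss of $\epsilon$ being exactly the polynomial factor $x^2$ in $g$. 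The bounds on $f^{(k)}$ for all $k$ then follow by differentiating the ODE: $f'' = -2(\frac{\pi^2}{12}+x^2)\eta - 6\eta^2 f + f$ expresses $f''$ in terms of $f$ and known exponentially decaying functions, and inductively every $f^{(k)}$ is a polynomial-times-exponential combination, again giving $e^{(1-\epsilon)|x|} f^{(k)} \in L^\infty$.

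The symplectic orthogonality \eqref{eq:orth} is where I would spend the most care. By \eqref{eq:liea1} the generators act as $e_1 \eta = -\eta'$, $e_2 \eta = ix\eta$, $e_3 \eta = i\eta$, $e_4 \eta = (x\eta)'= \eta + x\eta'$, and $\omega(f, X\eta) = \Im \int f \overline{X\eta}$; since $g$ is real we expect $f$ real, so $\omega(f,\cdot)$ only tests against the imaginary parts, i.e. against $e_2\eta = ix\eta$ and $e_3\eta = i\eta$. Thus \eqref{eq:orth} reduces to the two real conditions $\int f \eta = 0$ and $\int f x\eta = 0$. The second, $\int f x \eta = 0$, holds by parity: $f$ is even (it solves an even equation with even data, and the even solution is the $L^2$ one since $\eta'$ is odd), so $f x\eta$ is odd. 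The first, $\int f\eta = 0$, is the genuine computation: pairing $L_+ f = g$ against $\eta^2$ and using self-adjointness with $L_+(\eta^2) = -\frac32\eta^2$ gives $-\frac32 \int f\eta^2 = \int (\frac{\pi^2}{12}+x^2)\eta^3$, which determines $\int f\eta^2$ but not $\int f\eta$ directly; instead I would pair against $\eta$ itself. Since $L_+\eta = -\frac12\eta'' - 3\eta^3 + \frac12\eta$ and $-\frac12\eta'' + \frac12\eta = \eta^3$, we get $L_+ \eta = -2\eta^3$, so $\int f \eta = \langle f, \eta\rangle = -\frac12\langle f, L_+\eta\rangle + \langle f, L_+\eta\rangle\cdot(\text{correction})$... the clean route is $\langle L_+ f, \eta\rangle = \langle f, L_+\eta\rangle = -2\int f\eta^3$, while $\langle L_+f,\eta\rangle = \int(\frac{\pi^2}{12}+x^2)\eta^2$; combining with the $\eta^2$-pairing identity and the known integrals $\int \eta^2 = 2$, $\int x^2\eta^2 = \pi^2/6$, $\int\eta^4 = 4/3$, etc., forces $\int f\eta = 0$ once the constant $\pi^2/12$ in the right-hand side is chosen correctly — which is precisely why that constant appears. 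The main obstacle is organizing these moment identities so that the vanishing of $\int f\eta$ drops out cleanly rather than as a delicate cancellation; I expect it to come from pairing $L_+f = g$ against the two explicit functions $\eta$ and $\eta^2$ spanning the low modes of $L_+$ and reading off that the choice $\pi^2/12$ is exactly the one making the $\eta$-component of $f$ vanish.
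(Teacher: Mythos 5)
Your existence and decay arguments are essentially sound, and the decay route is genuinely different from the paper's: the paper conjugates $L_+$ by $e^{\sigma x}$, $|\sigma|<1$, takes Fourier transforms, and inverts the multiplier $\tfrac12\xi^2+\tfrac12-\tfrac12\sigma^2+i\sigma\xi$ to get $e^{\sigma x}f\in H^2$, then bootstraps derivatives, whereas you propose variation of parameters with the explicit homogeneous solutions; either can be made to work (though note that the reduction-of-order formula $\phi_2=\phi_1\int^x\phi_1^{-2}$ degenerates at the zero of $\eta'$ at $x=0$, so that construction must be carried out near $\pm\infty$ rather than globally). Your reduction of \eqref{eq:orth} to the two real conditions $\int f\,x\eta=0$ (parity, $f$ even) and $\int f\eta=0$ is also exactly the paper's.

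The genuine gap is in the decisive step, $\int f\eta=0$. Pairing $L_+f=\bigl(\tfrac{\pi^2}{12}+x^2\bigr)\eta$ with $\eta$ and with $\eta^2$, using $L_+\eta=-2\eta^3$ and $L_+\eta^2=-\tfrac32\eta^2$, yields the values of $\int f\eta^3$ and $\int f\eta^2$; these are independent linear functionals of $f$, and no combination of them produces $\int f\eta$, so the hoped-for "combination of moment identities" cannot close no matter how the constant $\pi^2/12$ is tuned (also, $\eta$ is not an eigenfunction of $L_+$, so there is no spectral "$\eta$-component" to read off). What is needed is an explicit preimage of $\eta$ under $L_+$: differentiating the soliton equation in the scaling parameter gives $L_+\bigl[(x\eta)'\bigr]=-\eta$ (the paper states this relation with the opposite sign; the sign is immaterial here), i.e. $(x\eta)'$ belongs to the generalized kernel of $i\mathcal{L}$. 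Pairing the equation with $(x\eta)'$ and using self-adjointness gives $\int f\eta=\mp\int\bigl(\tfrac{\pi^2}{12}+x^2\bigr)\eta\,(\eta+x\eta')\,dx$, and integration by parts together with $\int\eta^2=2$ and $\int x^2\eta^2=\pi^2/6$ shows this integral vanishes --- this is precisely where the constant $\pi^2/12$ is used. Without identifying $(x\eta)'$ (or some other explicit solution of $L_+\psi=\pm\eta$), your plan for \eqref{eq:orth} does not go through.
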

\begin{proof}
Since $0$ is an 
isolated point of the spectrum of $L_+$ and $L_+$ is self-adjoint, 
$L_+$ has a bounded inverse on 
\[ (\ker L_+)^\perp = (\operatorname{span}\{\partial_x \eta\})^\perp \,.\]
Hence, 
\[    (\ker L_+)^\perp  \ni   \left(\frac{\pi^2}{12}+x^2 \right)\eta  
\; \stackrel{L_+^{-1}}{\longmapsto} \;  f \in  (\ker L_+)^\perp  \,. \] 

By elliptic 
regularity, $f$ is smooth.  Moreover, since $ L_+ $ commutes
with $ g ( x ) \mapsto g ( -x ) $ and 
$ (\frac{\pi^2}{12}+x^2)\eta ( x ) $ is even, 
we conclude that $f$ is even.

Next we argue that 
$f$ has exponential decay. 
For that we conjugate the equation by 
$e^{\sigma x}$:
\[ e^{  \sigma x } L_+ e^{ - \sigma x } = L_+ +   \sigma \partial_x - 
\sigma^2 / 2 \,, \]
and apply both sides to $ e^{\sigma x } f ( x)$:
\[ e^{\sigma x } \Big( \frac{\pi^2}{12} + x^2 \Big) \eta 
= \Big(-\partial_x^2 + 2 \sigma \partial_x -  
\frac{\sigma^2}2 + \frac12 \Big)  e^{\sigma x } f + 3 \eta^2 e^{\sigma x } f 
\,.\]
Taking the Fourier transform this means that we have
$$\left( \frac 12 \xi^2  +\frac12-\frac12\sigma^2+i\sigma \xi \right ) 
\widehat{e^{\sigma x}f}(\xi) 
= \Big[ 3e^{\sigma x}\eta^2f 
+ e^{\sigma x}\Big(\frac{\pi^2}{12}+x^2\Big)\eta\Big]^{\widehat{\;}}(\xi),$$
which makes sense if $ |\sigma | < 1 $ as then the right hand side is
in $ L^2 $. 
Since the multiplier on the left-hand side is bounded away from $0$, we can 
invert the expression to obtain that $e^{\sigma x}f  \in H^2$.  From this, we 
deduce by Sobolev embedding that $e^{\sigma x}f\in L^\infty$.  By applying 
derivatives to \eqref{eq:L+} and then 
repeating this argument, we in fact obtain that all derivatives are pointwise 
exponentially localized in space.  

Finally, we prove the orthogonality condition \eqref{eq:orth}, that 
is,  $ \omega ( f , e_j \cdot \eta ) = 0 $ where
$ e_j $'s are given by \eqref{eq:liea1}.
Since $f$ is real, we clearly have 
\[ \omega(f, \eta') = \omega(f, (x\eta)')=0\,. \]
Since $f$ is even, we also have $\omega(f, ix\eta) =0$.  
It remains to show that $\omega(f,i\eta)=0$, that is that 
$\int f\eta=0$.  Note that 
$L_+ [(x \eta)']=\eta$ (by direct computation or from the structure of the
generalized kernel of $ i {\mathcal L} $).    Hence
$$\int f\eta = \int f L_+[(x\eta)'] \,dx = \int L_+f \; (x\eta)' \,dx 
= \int \Big(\frac{\pi^2}{12}+x^2\Big)\eta (\eta+ x\eta')\, dx\,. $$
Integration by parts and $\int \eta^2=2$, 
$\int x^2\eta^2 = {\pi^2}/{6}$, show that this is $0$.
\end{proof}


\renewcommand\thefootnote{\ddag}%

\section{Proof  of the main estimate}
\label{pr}

In the arguments that follow, we will assume that $g(u)$ satisfying \eqref{E:gdef} is always defined on any time interval under consideration, and thus $w$ given by \eqref{eq:wt} is also always defined.  This is, however, not known \textit{a priori} and Lemma \ref{l:FGJS} must be considered 
as part of the bootstrap argument, together with the lemmas that follow.  However, since this aspect of the argument is standard in papers on this subject, we will not make mention of it.  

Recall from Lemma \ref{l:wt} that the equation for $w$ is
\begin{equation}
\label{E:w}
\partial_t w =
\begin{aligned}[t]
&X\eta +i\Big[-V\Big(\frac{x}{\mu}+a\Big)+\alpha +\beta x\Big]\eta \\
&+Xw+i\Big[-V\Big(\frac{x}{\mu}+a\Big)+\alpha +\beta x\Big]w 
-i\mu^2\mathcal{L}w +i\mu^2\mathcal{N}w\,,
\end{aligned}
\end{equation}
where $ \alpha $ and $ \beta $ are time dependent parameters given by 
\eqref{eq:albe}.
Note that by Taylor's theorem the forcing term in this equation has second-order expansion
$$-i\Big[ V\Big(\frac{x}{\mu}+a\Big)+ \alpha +\beta x\Big]\eta 
= -i\frac{V''(a)}{2\mu^2}\Big(\frac{\pi^2}{12}+x^2\Big)\eta 
+ h^3\la x\ra^3\eta f_2$$
where, provided\footnote{This will follow easily from the bootstrap assumption.} $ 1/2\leq \mu\leq 1$, we have $|f_2(x,t)|\leq c$.  

We first 
consider the forced linear evolution obtained from \eqref{E:w} by 
discarding all terms we expect will be of order $h^3$ or higher:
\begin{equation}
\label{E:forcedlinear}
\partial_t \tilde w =-i\mu^2\mathcal{L}\tilde w 
-i\frac{V''(a)}{2\mu^2}\Big(\frac{\pi^2}{12}+x^2\Big)\eta
\end{equation}
We now introduce a natural approximate solution to this forced linear equation.  Let 
\begin{equation}
\label{eq:defwt}
\tilde w = -\frac{V''(a)}{2\mu^4}f\,, \ \ 
f \defeq L_{+}^{-1} \left(\frac{\pi^2}{12}+x^2 \right)\eta \,, 
\end{equation}
where $ f $ is given in Proposition \ref{p:new}.
Then $\tilde w$ satisfies \eqref{E:forcedlinear} to second-order, i.e.
\begin{equation}
\label{E:tildew}
\partial_t \tilde w = -i\mu^2\mathcal{L}\tilde w 
-i\frac{V''(a)}{2\mu^2}\Big(\frac{\pi^2}{12}+x^2\Big)\eta + h^3\theta f
\end{equation}
where 
$$\theta(t) \defeq \frac{1}{h^3}\Big[ -\frac{V'''(a)\dot a}{2\mu^4} 
+ \frac{2V''(a)\dot \mu}{\mu^5} \Big]$$

\begin{lem}
\label{L:thetabound}
Suppose there is a constant $c_1$ such that
\begin{equation}
\label{E:bs}
\|w\|_{L_{[t_1,t_2]}^\infty H_x^1} \leq c_1h^{3/2}
\end{equation}
Then provided $|t_2-t_1| \leq h^{-2}$, we have
$$\sup_{t_1\leq t\leq t_2}|\theta(t)| \leq c \, ,$$
where $c$ is a constant depending only upon $c_1$, $\|W^{(k)}\|_{L^\infty}$ for $k=0,1,2,3$ and $|v(t_1)|$. 
\end{lem}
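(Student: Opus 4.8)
The plan is to reduce the bound on $\theta$ to pointwise control of $\dot a$, $\dot\mu$ and a lower bound on $\mu$, and then to control $\dot a$ through an approximate conservation law that survives the long time interval. First I would use the scaling $V^{(k)}(a)=h^kW^{(k)}(ha)$ to rewrite
\[ \theta(t)=-\frac{W'''(ha)\,\dot a}{2\mu^4}+\frac{2W''(ha)\,\dot\mu}{h\,\mu^5}\,, \]
so the only $W$-dependence left is through $\|W''\|_{L^\infty}$ and $\|W'''\|_{L^\infty}$. From Lemma \ref{l:gath} and the bootstrap hypothesis \eqref{E:bs}, $1-\mu=\tfrac12\mu\|w\|_{L^2}^2=\mathcal O(c_1^2h^3)$, so $1/2\le\mu\le1$ for $h$ small and the denominators $\mu^4,\mu^5$ are bounded below. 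From Lemma \ref{C:nomus} and \eqref{E:bs}, $|X_0|\le c(c_1)h^3$, and comparing with \eqref{eq:defX0} componentwise gives $\dot a=v+\mathcal O(h^3)$, $\dot v=-\beta+\mathcal O(h^3)$ and $\dot\mu=\mathcal O(h^3)$. In particular the second term of $\theta$ is $\mathcal O(h^2)$, so everything reduces to bounding $|v(t)|$ on $[t_1,t_2]$.

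The main obstacle is precisely this velocity bound: the crude estimate $|\dot v|\le|\beta|+\mathcal O(h^3)\le\|V'\|_{L^\infty}/\mu+\mathcal O(h^3)=\mathcal O(h)$, integrated over an interval of length $h^{-2}$, only gives $|v(t)-v(t_1)|=\mathcal O(h^{-1})$, which is useless. I would instead use the approximate Newtonian energy
\[ E(t)\defeq\tfrac12v(t)^2+\tfrac12(\sech^2*V)(a(t))\,. \]
Since $V'=hW'(h\,\cdot\,)$ one has $|(\sech^2*V')(a)|\le2h\|W'\|_{L^\infty}$, and the expansion of $\beta$ from \S\ref{re} together with $\mu=1+\mathcal O(h^3)$ gives $\beta=\tfrac12(\sech^2*V')(a)+\mathcal O(h^3)$. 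Hence, using $\dot a=v+\mathcal O(h^3)$ and $\dot v=-\beta+\mathcal O(h^3)$, the two leading terms in $\dot E=v\dot v+\tfrac12(\sech^2*V')(a)\dot a$ cancel and
\[ \dot E=v\,\mathcal O(h^3)+\tfrac12(\sech^2*V')(a)\,\mathcal O(h^3)=\mathcal O(h^3)\bigl(|v|+1\bigr)\,. \]
Since $\tfrac12v^2=E-\tfrac12(\sech^2*V)(a)\le E+\|W\|_{L^\infty}$, set $\Phi\defeq E+\|W\|_{L^\infty}+1\ge\tfrac12v^2+1\ge1$; then $|v|\le\sqrt{2\Phi}$ and $\dot\Phi\le Ch^3\sqrt\Phi$, i.e. $\tfrac{d}{dt}\bigl(2\sqrt\Phi\,\bigr)\le Ch^3$. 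Integrating over $|t-t_1|\le h^{-2}$ gives $\sqrt{\Phi(t)}\le\sqrt{\Phi(t_1)}+\tfrac C2h$, hence $\Phi(t)\le2\Phi(t_1)+1$ for $h$ small, and since $\Phi(t_1)\le\tfrac12v(t_1)^2+2\|W\|_{L^\infty}+1$ we get $|v(t)|\le c\bigl(|v(t_1)|,\|W\|_{L^\infty}\bigr)$ on all of $[t_1,t_2]$.

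Finally, putting this together, $|\dot a|\le|v|+\mathcal O(h^3)\le c$, $|\dot\mu|\le ch^3$ and $1/2\le\mu\le1$, so
\[ |\theta(t)|\le\frac{\|W'''\|_{L^\infty}}{2\,(1/2)^4}\,|\dot a|+\frac{2\|W''\|_{L^\infty}}{(1/2)^5}\,\frac{|\dot\mu|}{h}\le c\,, \]
with $c$ depending only on $c_1$ (through the $X_0$ bound and the constant in the estimate for $\dot E$), on $\|W^{(k)}\|_{L^\infty}$ for $k=0,1,2,3$, and on $|v(t_1)|$, as asserted. Equivalently, one may replace $E$ by $\tfrac12v^2+\tfrac1{2\mu}\int V(x/\mu+a)\eta^2(x)\,dx$, whose $a$-derivative is exactly $\beta$ and whose $\mu$-derivative is $\mathcal O(1)$; this avoids invoking the expansion of $\beta$ and yields the same differential inequality.
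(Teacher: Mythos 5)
Your proof is correct and follows essentially the same route as the paper: from the bootstrap bound and Lemma \ref{C:nomus} you extract $\dot a = v + \mathcal{O}(h^3)$, $\dot v = -\beta + \mathcal{O}(h^3)$, $\dot\mu = \mathcal{O}(h^3)$, and then control $|v|$ on $[t_1,t_2]$ via near-conservation of a classical energy over the length-$h^{-2}$ interval, which is exactly how the paper proceeds (it uses $\tfrac12 v^2 + V(a)$). Your minor variations --- taking the effective potential $\tfrac12\,\sech^2 * V$ so that the cancellation against $\beta$ is exact up to $\mathcal{O}(h^3)$, and running a $\sqrt{\Phi}$ differential inequality instead of the paper's crude velocity bound fed back into the energy estimate --- are cosmetic and close the argument just as well.
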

\begin{proof}
By \eqref{E:bs}, Lemma \ref{C:nomus} and Taylor expansions 
(see \eqref{eq:thus}), we have
\begin{equation}
\label{E:paramcontrol}
|\dot v + V'(a)| +|\dot \mu|  + |-\dot a+v| \leq ch^3
\end{equation}
By integrating the first bound, we obtain the following rough estimate
\begin{equation}
\label{E:vcrude}
\sup_{t_1\leq t\leq t_2} |v(t)| \leq |v(t_1)| + ch\|W'\|_{L^\infty}(t_2-t_1) + ch^3(t_2-t_1) \,. 
\end{equation}
We have
$$|\dot v v + V'(a)\dot a| \leq  |\dot v + V'(a)||v| + |V'(a)||\dot a -v|$$
and thus, by \eqref{E:paramcontrol} and \eqref{E:vcrude},
$$\sup_{t_1\leq t\leq t_2} |\dot v v + V'(a)\dot a| \leq ch^3|v(t_1)|+ ch^4\|W'\|_{L^\infty}\la t_2-t_1\ra + ch^6(t_2-t_1)$$
Integrating this bound, we obtain a near conservation of classical energy,
\begin{align*}
\indentalign \Big| \Big( \frac{v^2}{2} + V(a)\Big) - \Big( \frac{v(t_1)^2}{2} + V(a(t_1))\Big) \Big| \\
&\leq ch^3|v(t_1)|(t_2-t_1) + ch^4\|W'\|_{L^\infty}\la t_2-t_1\ra(t_2-t_1) + ch^6(t_2-t_1)^2
\end{align*}
from which we obtain $\sup_{t_1\leq t\leq t_2} |v(t)| \leq c$.  
This and \eqref{E:paramcontrol} imply that 
$$\sup_{t_1\leq t\leq t_2} |\dot a(t)| \leq c\,, $$
 and thus $|\theta(t)|\leq c$.
\end{proof}

This approximate solution $\tilde w$ provides heuristic evidence that $w$ should be of order $h^2$, but it will also play a key r\^ole in our rigorous argument establishing this fact. 

\begin{lem}[Lyapunov energy estimate]
\label{L:Lyapunov}
Suppose that, for some constant $c_1$,
\begin{equation}
\label{E:bootstrap}
\|w\|_{L_{[t_1,t_2]}^\infty H_x^1} \leq c_1h^{3/2}
\end{equation}
Then, provided 
\begin{equation}
\label{E:timebd}
|t_2-t_1| \leq \frac{c}{h} \, ,
\end{equation}
we have,
\begin{equation}
\label{E:w_est}
\|w\|_{L^\infty_{[t_1,t_2]} H^1} \leq 4\sqrt{c_2}\|w(t_1)\|_{H^1} +ch^2
\end{equation} 
The constants $c$ in \eqref{E:timebd} and \eqref{E:w_est} depend upon $c_1$, $\|W^{(k)}\|_{L^\infty}$ for $k=0,1,2,3$, and $|v(t_1)|$, although are independent of $\|w(t_1)\|_{H^1}$.  The constant $c_2 \defeq (7+2\rho_0)/(2\rho_0)\approx 18.02$, with $\rho_0$ the absolute constant appearing in \eqref{eq:coer}. 
\end{lem}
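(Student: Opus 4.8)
The plan is to run a Lyapunov/energy argument for the quadratic functional $L_0(w)=\langle\mathcal{L}w,w\rangle$, using the coercivity estimate of Proposition \ref{p:coer} at both ends: coercivity gives $L_0(w)\gtrsim\|w\|_{H^1}^2$, so controlling $L_0$ controls $\|w\|_{H^1}$, while $L_0(w)\le C\|w\|_{H^1}^2$ gives the starting bound. First I would differentiate $L_0(w(t))$ in time, inserting the equation for $w$ from Lemma \ref{l:wt} (equivalently \eqref{E:w}). The key algebraic point is that the leading dangerous term, $-i\mu^2\mathcal{L}w$, is \emph{skew} with respect to the bilinear form $\langle\mathcal{L}\cdot,\cdot\rangle$ in the appropriate sense: since $\mathcal{L}$ is self-adjoint and $i$ is skew-adjoint, $\langle\mathcal{L}(i\mu^2\mathcal{L}w),w\rangle+\langle\mathcal{L}w,i\mu^2\mathcal{L}w\rangle$ is (up to the slowly varying factor $\mu^2$, whose derivative is $O(h^3)$ by \eqref{E:paramcontrol}) equal to $\mu^2(\langle i\mathcal{L}w,\mathcal{L}w\rangle-\langle i\mathcal{L}w,\mathcal{L}w\rangle)=0$. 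So that term drops out of $\frac{d}{dt}L_0$, which is exactly why one works with $L_0$ rather than $\|w\|_{H^1}^2$ directly.

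Next I would estimate the remaining contributions to $\frac{d}{dt}L_0(w)$ one by one. The forcing term $i[-V(x/\mu+a)+\alpha+\beta x]\eta$ is $O(h^2)$ in a weighted space by \eqref{eq:thus}, so its pairing against $\mathcal{L}w$ contributes $O(h^2)\|w\|_{H^1}$; this is the source of the $ch^2$ in \eqref{E:w_est}. The terms $Xw$, $X\eta$, the potential term paired with $w$, and the nonlinearity $i\mu^2\mathcal{N}w$ are all \emph{higher order}: by Lemma \ref{l:X}/Lemma \ref{C:nomus}, $|X|\lesssim h^2\|w\|_{H^1}+\|w\|_{H^1}^2$, and under the bootstrap assumption \eqref{E:bootstrap} with $\|w\|_{H^1}\lesssim h^{3/2}$ these produce terms like $h^2\|w\|_{H^1}^2$, $h^3\|w\|_{H^1}$, $\|w\|_{H^1}^3$, all of which are either $o(\|w\|_{H^1}^2)\cdot(\text{small})$ or $O(h^2)\|w\|_{H^1}$ or absorbable. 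Here I expect to use the trick—already foreshadowed by the introduction of $\tilde w$ in \eqref{eq:defwt}—of subtracting the explicit approximate solution: write $w=\tilde w+(w-\tilde w)$, note $\|\tilde w\|_{H^1}\lesssim h^2$, and run the energy estimate on $r\defeq w-\tilde w$, whose forcing (after using \eqref{E:tildew}) is genuinely $O(h^3)$ up to the term $h^3\theta f$ controlled by Lemma \ref{L:thetabound}. That is the cleanest route and keeps the forcing at $h^3$ so that, integrated over a time of length $c/h$, it contributes $O(h^2)$.

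Assembling, I would obtain a differential inequality of the schematic form $\frac{d}{dt}L_0(w)\le C h^2\|w\|_{H^1}+C h^3\|w\|_{H^1}+(\text{cubic, absorbable})$, hence, using coercivity $\|w\|_{H^1}^2\lesssim L_0(w)$ and $\sqrt{L_0}\gtrsim\|w\|_{H^1}$, a closed inequality for $\sqrt{L_0(w(t))}$ of the form $\frac{d}{dt}\sqrt{L_0}\le Ch^2$ (the cubic term being absorbed because $\|w\|_{H^1}\ll 1$), which integrated over $|t-t_1|\le c/h$ yields $\sqrt{L_0(w(t))}\le\sqrt{L_0(w(t_1))}+ch$; converting back via $c_2$ gives $\|w\|_{H^1}\le\sqrt{c_2}\sqrt{L_0(w(t_1))}+ch\cdot\sqrt{c_2}$ and then $\|w(t_1)\|_{H^1}$ dominates $\sqrt{L_0(w(t_1))}$ (since $L_0(w)\le C\|w\|_{H^1}^2$), producing \eqref{E:w_est} with the constant $4\sqrt{c_2}$ once numerical constants are tracked. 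The main obstacle I anticipate is not any single estimate but the bookkeeping of the time factor: one must check that every error term, when integrated over a time interval of length $c/h$, stays $O(h^2)$—in particular the $h^3$ forcing gives $h^2$, the $h^2\|w\|_{H^1}$ term gives (using $\|w\|_{H^1}\lesssim h^{3/2}$) $h^{3/2}\cdot h\cdot\sqrt{L_0}$ which must be absorbed into the left side as a small multiple of $\sqrt{L_0}$, and $\dot\mu, \dot v+V'(a)$ being $O(h^3)$ is what keeps the skew-term's error manageable. So the delicate point is choosing the constant $c$ in \eqref{E:timebd} small enough that the $O(h^2)\sqrt{L_0}/\sqrt{L_0}=O(h^2)$-type and cubic corrections to the Grönwall inequality do not degrade the constant past $4\sqrt{c_2}$, and verifying this is consistent with the bootstrap threshold $c_1 h^{3/2}$.
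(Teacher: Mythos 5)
Your middle paragraph contains the paper's key idea — run the $\langle\mathcal{L}\cdot,\cdot\rangle$ energy estimate on $w_1=w-\tilde w$ with $\tilde w$ from \eqref{eq:defwt}, so that after \eqref{E:tildew} and Lemma \ref{L:thetabound} the forcing is genuinely $O(h^3)$ and integrates over a time of length $c/h$ to $O(h^2)$ — but your final assembly abandons it and thereby fails to prove \eqref{E:w_est}. In the last paragraph you close the inequality for $\sqrt{L_0(w)}$ with a right-hand side $Ch^2$ coming from pairing the $O(h^2)$ forcing of \eqref{eq:thus} directly against $\mathcal{L}w$; integrated over $|t-t_1|\le c/h$ this gives only $\sqrt{L_0(w(t))}\le\sqrt{L_0(w(t_1))}+ch$, i.e.\ an error $ch$, not the claimed $ch^2$ (this is precisely the weaker, Fr\"ohlich--Gustafson--Jonsson--Sigal-type bound). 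To obtain $ch^2$ you must carry the decomposition through the whole computation: every term is paired with $\mathcal{L}w_1$, the $O(h^2)$ forcing never appears (it has been replaced by $h^3 f_1$ plus $h^3\theta f$), and only at the very end do you convert back via $\|w\|\le\|w_1\|+\|\tilde w\|$ with $\|\tilde w\|_{H^1}\le ch^2$. As written, your schematic differential inequality mixes the two routes and the exponents do not close.

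Two further points need attention if you run the estimate on $w_1$, as you must. First, the coercivity of Proposition \ref{p:coer} is applied to $w_1$, not to $w$, so you need the symplectic orthogonality of $\tilde w$ to $T_\eta M$; this is exactly \eqref{eq:orth} in Proposition \ref{p:new} and should be invoked explicitly. Second, your bound ``$h^2\|w\|_{H^1}^2$'' for the potential term paired with $w_1$ is too optimistic: in the $\partial_x^2$ part of $\mathcal{L}$ one must integrate by parts, and the derivative landing on the bracket $-V(x/\mu+a)+\alpha+\beta x$ is only $O(h)$ uniformly (the $O(h^2)$ estimate carries a factor of $x$, which cannot be used against $w_1$, which has no spatial decay). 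So this term is $ch\|w_1\|_{H^1}^2$; it is still harmless because over a time interval of length $c/h$ it contributes $c\|w_1\|_{L^\infty H^1}^2$, absorbable for $c$ small, and it is exactly what limits the argument to times of order $1/h$ in \eqref{E:timebd}. With these corrections your outline coincides with the paper's proof.
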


We will postpone the proof of Lemma \ref{L:Lyapunov} to the end of the section.
In the next theorem we iterate the above bound, and close the bootstrap
argument.

\begin{thm}
\label{t:2}
Let $0\leq \delta < \frac14$.  Let $w_0=w(0)$, and suppose that there is a constant $c_1$ such that 
$$\|w_0\|_{H^1} \leq c_1h^{\frac32+3\delta} \,.$$   
Then, provided 
\begin{equation}
\label{E:tbound}
|t|\leq \frac{c(1+\delta|\log h|)}{h} \, \; \text{ and } \; 0<h\leq \epsilon \, ,
\end{equation}
we have
\begin{equation}
\label{E:wbd2}
\|w\|_{L_{[0,t]}^\infty H_x^1} \leq 4\sqrt{c_2}h^{-2\delta}\|w_0\|_{H^1} + ch^{2(1-\delta)}
\end{equation}
The constant $c$ in \eqref{E:tbound} and \eqref{E:wbd2} and the constant $\epsilon$ in \eqref{E:tbound} depend upon $c_1$, $\|W^{(k)}\|_{L^\infty}$ for $k=0,1,2,3$, and $|v(0)|$, although are independent of $\delta$ and $\|w_0\|_{H^1}$.  The constant $c_2 \defeq (7+2\rho_0)/(2\rho_0)\approx 18.02$, with $\rho_0$ the absolute constant appearing in \eqref{eq:coer}. 
\end{thm}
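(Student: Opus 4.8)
The plan is to iterate Lemma~\ref{L:Lyapunov} over roughly $|\log h|$ consecutive time steps of length $\sim c/h$, using the bootstrap assumption \eqref{E:bootstrap} on each step and checking that the accumulated growth does not violate it. First I would set up the iteration: partition $[0,t]$ into $N$ subintervals $[t_{k-1},t_k]$ each of length exactly $c/h$ (with $c$ the constant from \eqref{E:timebd}), so that $N \sim (1+\delta|\log h|)$ by \eqref{E:tbound}. On each step Lemma~\ref{L:Lyapunov} gives $\|w\|_{L^\infty_{[t_{k-1},t_k]}H^1}\leq 4\sqrt{c_2}\|w(t_{k-1})\|_{H^1}+ch^2$, hence setting $A_k \defeq \|w(t_k)\|_{H^1}$ we get the recursion $A_k \leq 4\sqrt{c_2}\,A_{k-1}+ch^2$, which unwinds to $A_k \leq (4\sqrt{c_2})^k A_0 + ch^2\sum_{j=0}^{k-1}(4\sqrt{c_2})^j \leq (4\sqrt{c_2})^k(A_0 + ch^2)$. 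Since $4\sqrt{c_2}$ is an absolute constant $>1$, and $k\leq N \leq c(1+\delta|\log h|)$, we have $(4\sqrt{c_2})^k \leq (4\sqrt{c_2})^{c(1+\delta|\log h|)} = C h^{-c\delta\log(4\sqrt{c_2})}$; by shrinking the constant $c$ in \eqref{E:tbound} (which only decreases $N$, hence is harmless for the time bound we want to reach) we can arrange $c\log(4\sqrt{c_2}) \leq 2$, so that $(4\sqrt{c_2})^k \leq Ch^{-2\delta}$. This yields \eqref{E:wbd2}: $\|w\|_{L^\infty_{[0,t]}H^1}\leq 4\sqrt{c_2}h^{-2\delta}\|w_0\|_{H^1} + ch^{2(1-\delta)}$ (keeping one extra factor $4\sqrt{c_2}$ from the last step, and bounding $h^2 h^{-2\delta}=h^{2(1-\delta)}$).

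The delicate point, and where I expect the main obstacle, is the \emph{self-consistency of the bootstrap}: Lemma~\ref{L:Lyapunov} may only be applied on $[t_{k-1},t_k]$ if the a~priori bound $\|w\|_{L^\infty_{[t_{k-1},t_k]}H^1}\leq c_1 h^{3/2}$ holds there, and this must be verified before invoking the conclusion, not after. The standard continuity/bootstrap device handles this: one argues that the set of times $t$ for which the bound $\|w\|_{L^\infty_{[0,t]}H^1}\leq C' h^{3/2}$ holds (with $C'$ a fixed multiple of $c_1$, strictly larger than the constant forced by the conclusion) is open, closed, and nonempty in the relevant interval, provided the conclusion \eqref{E:wbd2} actually improves the a~priori bound. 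For this improvement to work we need the right-hand side of \eqref{E:wbd2} to be $o(h^{3/2})$, i.e. $h^{-2\delta}\|w_0\|_{H^1}\ll h^{3/2}$ and $h^{2(1-\delta)}\ll h^{3/2}$. The first requires $\|w_0\|_{H^1}\ll h^{3/2+2\delta}$, which is why the hypothesis is stated as $\|w_0\|_{H^1}\leq c_1 h^{3/2+3\delta}$ (the extra $h^\delta$ margin absorbs the multiplicative constant $4\sqrt{c_2}$ over the $\sim|\log h|/\log$ steps). The second requires $2(1-\delta)>3/2$, i.e. $\delta<1/4$, precisely the stated range. I would also need to confirm at the start of each step that $1-\mu \ll 1$ and $1/2\leq \mu\leq 1$, which follows from Lemma~\ref{l:gath} ($\|w\|_{L^2}^2 = 2(1-\mu)/\mu$) together with the bootstrap bound $\|w\|_{H^1}\leq C'h^{3/2}$, so $\mu$ stays within $O(h^3)$ of $1$ throughout — comfortably inside the needed range — which in turn justifies applying Lemmas~\ref{L:thetabound} and~\ref{C:nomus} on each step.

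A secondary technical wrinkle is keeping track of which constants are allowed to depend on what. The constants in Lemma~\ref{L:Lyapunov} depend on $c_1$, $\|W^{(k)}\|_{L^\infty}$, and crucially on $|v(t_1)|$, the velocity at the \emph{start} of the step. Since the soliton velocity can grow over time, I must verify, as part of the induction, that $|v(t_k)|$ remains bounded by an absolute constant (depending only on $|v(0)|$ and $\|W'\|_{L^\infty}$) uniformly over all $N\sim|\log h|$ steps. This is exactly the near-conservation-of-classical-energy estimate already carried out inside the proof of Lemma~\ref{L:thetabound}: from \eqref{E:paramcontrol} one gets $|\tfrac{d}{dt}(\tfrac12 v^2 + V(a))| = O(h^3|v| + h^4\la t_2-t_1\ra + h^6(t_2-t_1))$, and integrating over the full interval $[0,t]$ of length $O(|\log h|/h)$ the total drift in $\tfrac12 v^2+V(a)$ is $O(h^2|\log h|) + O(h^2|\log h|^2) = o(1)$, so $|v(t)|$ stays bounded. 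With $|v(\cdot)|$ thus controlled, the constant $c$ in Lemma~\ref{L:Lyapunov} can be taken uniform over all steps, and the geometric recursion above closes cleanly. Finally I would record that $0\leq\delta<1/4$ enters in exactly two places — the exponent $3/2+3\delta$ in the hypothesis (margin for the bootstrap) and the condition $\delta<1/4$ ensuring $h^{2(1-\delta)} = o(h^{3/2})$ — and that $\epsilon$ is chosen small enough that all the $o(\cdot)$'s above are genuine smallness for $0<h\leq\epsilon$.
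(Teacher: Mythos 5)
Your proposal is correct and follows essentially the same route as the paper: iterate Lemma \ref{L:Lyapunov} on successive intervals of length $c/h$ to get the geometric recursion, choose the number of steps so that $(4\sqrt{c_2})^k\lesssim h^{-2\delta}$, close the bootstrap using $\|w_0\|_{H^1}\leq c_1h^{3/2+3\delta}$ and $\delta<\tfrac14$ so the conclusion stays below $c_1h^{3/2}$, and keep the constants uniform across steps via the boundedness of $|v|$ from Lemma \ref{L:thetabound}. The only differences are cosmetic bookkeeping (shrinking the time constant $c$ versus fixing $k$ explicitly, and spelling out the continuity argument the paper treats as standard).
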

\begin{proof}
We apply Lemma \ref{L:Lyapunov} $k$ times on successive intervals each of size $c/h$ (where $c$ is as given in Lemma \ref{L:Lyapunov}) to obtain the bound
$$\|w\|_{L_{[0,ck/h]}^\infty H_x^1} \leq (4\sqrt c_2)^k\|w_0\|_{H^1} + \left( \sum_{j=0}^{k-1} (4\sqrt c_2)^j \right)ch^2$$
This is only valid provided that the hypothesis of Lemma \ref{L:Lyapunov}
is satisfied  over the whole collection of time intervals:
$$ (4\sqrt c_2)^k\|w_0\|_{H^1} + \left( \sum_{j=0}^{k-1} (4\sqrt c_2)^j \right)ch^2 \leq c_1h^{3/2} \,,$$
By taking 
$$k= 1+\frac{2 \delta|\log h|}{\log (4\sqrt {c_2})} \,$$
we obtain that
$$(4\sqrt c_2)^k\|w_0\|_{H^1} + \left( \sum_{j=0}^{k-1} (4\sqrt c_2)^j \right)ch^2 \leq (4\sqrt c_2)h^{-2\delta}\|w_0\|_{H^1} + ch^{2(1-\delta)}$$
Thus, it suffices to ensure that
$$(4\sqrt c_2)h^{-2\delta}\|w_0\|_{H^1} + ch^{2(1-\delta)} \leq c_1h^{3/2}\,, $$
and 
this is accomplished provided $h\leq \epsilon$, for a suitable $\epsilon$ with the dependence as stated in the proposition.  We note that the constant $c$ from Lemma \ref{L:Lyapunov} did not change from one iteration to the next, since $|v(t_1)|$ remains uniformly bounded by Lemma \ref{L:thetabound}, which applies on a time interval of size $h^{-2}$.
\end{proof}

We now conclude the section with the proof of Lemma \ref{L:Lyapunov}.

\begin{proof}[Proof of Lemma \ref{L:Lyapunov}]
For the proof, we shall assume, in place of \eqref{E:bootstrap}, the bound
$$\|w\|_{L_{[t_1,t_2]}^\infty H^1} \leq c_1h^{2(1-\delta')}$$
We will conclude at the end that it suffices to take $\delta'=\frac14$.  Let 
$$w_1\defeq w-\tilde w \, ,$$
where $ \tilde w $ is an approximate solution to \eqref{E:forcedlinear}
given by \eqref{eq:defwt}.

From \eqref{E:w} and \eqref{E:tildew}, we derive the equation 
satisfied by $ w_1 $:
$$\partial_t w_1 =
\begin{aligned}[t]
&-i\mu^2\mathcal{L}w_1+ X\eta + i\Big[- V\Big(\frac{x}{\mu}+a\Big)+\alpha 
+\beta x +\frac{V''(a)}{2\mu^2}\Big(\frac{\pi^2}{12}+x^2\Big)\Big]\eta 
- h^3\theta f\\
&+Xw+i\Big[-V\Big(\frac{x}{\mu}+a\Big)+\alpha +\beta x\Big]w  
+i\mu^2\mathcal{N}w \,.
\end{aligned}
$$
By grouping forcing terms of size $ {\mathcal O} (h^3) $, 
we rewrite the above as
$$\partial_t w_1 = -i\mu^2\mathcal{L}w_1+ h^3f_1+Xw
+i\Big[-V\Big(\frac{x}{\mu}+a\Big)+\alpha +\beta x\Big]w  +i\mu^2\mathcal{N}w
$$
where $\|f_1(x,t)\|_{H_x^1} \leq c$ (uniformly in $t$).  Note that here we have applied Lemma \ref{L:thetabound} to conclude that $|\theta(t)|\leq c$.  

We recall that $\mathcal{L}$ is self-adjoint 
with respect to
\[\la u,v\ra = \Re \int u\bar v \,, \]
and hence, writing $Xw=Xw_1+X\tilde w$, we compute 
\begin{align*}
\frac 12 \partial_t \la \mathcal{L}w_1, w_1\ra &= \la \mathcal{L}w_1,\partial_t w_1\ra\\
&= 
\begin{aligned}[t]
&-\mu^2\la \mathcal{L}w_1, i\mathcal{L}w_1\ra + \la \mathcal{L}w_1,h^3f_1\ra 
+ \la \mathcal{L}w_1, Xw_1\ra + \la \mathcal{L}w_1, X\tilde w\ra \\
&+  \la \mathcal{L}w_1, 
i\Big[-V\Big(\frac{x}{\mu}+a\Big)+\alpha +\beta x\Big]w_1 \ra \\
&+ \la \mathcal{L}w_1, 
i\Big[-V\Big(\frac{x}{\mu}+a\Big)+\alpha +\beta x\Big]\tilde w \ra 
+ \mu^2\la \mathcal{L}w_1, i\mathcal{N}w \ra
\end{aligned}\\
&= \text{I}+\text{II}+\text{III}+\text{IV}+\text{V}+\text{VI}+\text{VII}
\end{align*}
Now we analyse these terms one-by-one.  
First, 
\begin{equation}
\label{E:bdI}
\text{I}=0 . 
\end{equation}
For II, we use integration by parts for the $\partial_x^2$ term to move one 
$\partial_x$ onto $f_1$ and then apply the Cauchy-Schwarz
inequality; for the other terms we use a 
direct application of the Cauchy-Schwarz inequality, and together
these give
\begin{equation}
\label{E:bdII}
|\text{II}| \leq 4h^3 \|w_1\|_{H^1}\|f_1\|_{H_x^1} \leq ch^3\|w_1\|_{H^1}
\end{equation}
The next term, $\text{III}$, requires more care:
\begin{align*}
\text{III} &= \la \mathcal{L}w_1, Xw_1 \ra \\
&= \frac 12 \la (  w_1 -  \partial_x^2w_1- 4 \eta^2w_1- 2 \eta^2\bar w_1),
(-a_1\partial_xw_1+ a_2ixw_1 + a_3iw_1 +a_4\partial_x (xw_1))\ra
\end{align*}
where $a_j$, the components of $X$, are time dependent but space independent.  
By the bootstrap assumption \eqref{E:bootstrap} and Lemma \ref{l:X}, $|a_j|\leq ch^{4-4\delta'}$.  To proceed, we do some  further calculations:
$$\la w_1, Xw_1\ra = a_4 \la w_1, (w_1 + x\partial_x w_1)\ra 
= \frac12a_4\|w_1\|_{L^2}^2$$
\begin{align*}
\la \partial_x^2 w_1, Xw_1 \ra &= a_2\la \partial_x^2 w_1, ixw_1\ra 
+ a_4 \la \partial_x^2 w_1, w_1+x\partial_x w_1\ra \\
&=a_2\la \partial_x w_1, i w_1\ra - \frac32a_4 \|\partial_x w_1\|_{L^2}^2
\end{align*}
and thus the above two terms are bounded by $ch^{4-4\delta'}\|w_1\|_{H^1}^2$.  
For the terms involving $ \eta $, we use the Cauchy-Schwarz inequality 
and the fact that  $\eta \in {\mathcal S}$:
$$|\la \eta^2 w_1, Xw_1\ra| 
+ |\la \eta^2 w_1, X\bar w_1\ra|\leq (\max |a_j|) \|w_1\|_{H^1}^2$$
Altogether then, we have
\begin{equation}
\label{E:bdIII}
|\text{III}| \leq ch^{4-4\delta'}\|w_1\|_{H^1}^2
\end{equation}
Now we move on to IV:
\begin{align*}
\text{IV} &= \la \mathcal{L}w_1, X\tilde w\ra \\
&= \frac 12 \la (w_1-\partial_x^2w_1-4 \eta^2w_1- 2 \eta^2\bar w_1),
(-a_1\partial_x\tilde w+ a_2ix\tilde w + a_3i\tilde w 
+a_4\partial_x (x\tilde w))\ra \,. 
\end{align*}
For this term, we are forced to directly estimate
by the Cauchy-Schwarz inequality 
(although for the $\partial_x^2w_1$ term, we integrate by 
parts one $\partial_x$ factor).   The bound that we get is
$$|\text{IV}| \leq ch^{4-4\delta'}\|w_1\|_{H^1} (\|\la x \ra \tilde w\|_{L^2} 
+ \|\la x \ra \partial_x \tilde w\|_{L^2} 
+ \|\la x \ra \partial_x^2 \tilde w\|_{L^2})$$
From the definition \eqref{eq:defwt} of 
$\tilde w$ and Proposition \ref{p:new}, 
we obtain that
all the norms involving $\tilde w$ are bounded by $h^2$.  Thus,
\begin{equation}
\label{E:bdIV}
|\text{IV}| \leq ch^{6-4\delta'}\|w_1\|_{H^1} \,. 
\end{equation}
Next, we move on to V:
\begin{align*}
\text{V} &= \la \mathcal{L}w_1, i\Big[-V\Big(\frac{x}{\mu}+a\Big)+\alpha 
+\beta x\Big]w_1 \ra \\
&= \frac 12 \la (w_1-\partial_x^2w_1- 4\eta^2w_1-2 \eta^2\bar w_1),  
i\Big[-V\Big(\frac{x}{\mu}+a\Big)+\alpha +\beta x\Big]w_1 \ra \\
&= \frac 12 
\la (-\partial_x^2w_1-\eta^2\bar w_1),  i\Big[-V\Big(\frac{x}{\mu}+a\Big)
+\alpha +\beta x\Big]w_1 \ra \,. 
\end{align*}
To estimate the first term, we integrate by parts and use that 
$$\Big| -\frac{1}{\mu}V'\Big(\frac{x}{\mu}+a\Big) + \beta \Big| \leq ch$$
(note that other estimates are available, like $cxh^2$, but we do not want an 
$x$ coefficient here).  For the second term, we use \eqref{eq:thus},
the Cauchy-Schwarz 
inequality and the rapid decay of 
$\eta^2$:
$$\Big| \Big[-V\Big(\frac{x}{\mu}+a\Big)+\alpha +\beta x\Big]\eta^2 \Big| 
\leq ch^2$$
This gives the bound\footnote{It is unlikely that we can do better than $h$ as a coefficient here, and thus 
this seems to be what limits us ultimately to time $1/h$.}  
\begin{equation}
\label{E:bdV}
|\text{V}| \leq ch\|w_1\|_{H^1}^2 \,. 
\end{equation}
 Now we move on to 
the next term, VI.  
$$\text{VI} = \la \mathcal{L}w_1, i \Big[ -V\Big(\frac{x}{\mu}+a\Big)+\alpha
+\beta x\Big]\tilde w \ra$$
In the $\partial_x^2$ term of $\mathcal{L}$, we integrate by parts one 
$\partial_x$, and then estimate by the Cauchy-Schwarz inequality. 
All other terms, are estimated by a direct application of 
the Cauchy-Schwarz inequality. The bound obtained is
\begin{align}
\notag |\text{VI}| &\leq \|w_1\|_{H^1}\left\| \la \partial_x \ra 
\Big[ -V\Big(\frac{x}{\mu}+a\Big)+\alpha+\beta x\Big]\tilde w\right \|_{L^2} \\
\notag & \leq ch^2\|w_1\|_{H^1}(\| \la x\ra^2\tilde w\|_{L^2} 
+ \|\la x\ra^2\partial_x\tilde w\|_{L^2})\\
\label{E:bdVI} &\leq ch^4\|w_1\|_{H^1}
\end{align}
by the localization of $\tilde w$.  For the last term, VII, we use integration 
by parts once for the $\partial_x^2$ term, and then apply 
the Cauchy-Schwarz inequality to all 
terms.  Since we are in one-dimension, we have 
the embedding $\|w\|_{L^\infty} \leq c\|w\|_{H^1}$.
$$\text{VII} = -\mu^2\la \mathcal{L}w_1, i\mathcal{N}w\ra$$
\begin{equation}
\label{E:bdVII}
\implies |\text{VII}| \leq \|w_1\|_{H^1}( \|w\|_{H^1}^2+ \|w\|_{H^1}^3) 
\leq ch^{4-4\delta'}\|w_1\|_{H^1}
\end{equation}
by the bootstrap assumption. 

This completes the step-by-step estimation process, and the bound we get from 
\eqref{E:bdI},  \eqref{E:bdII},  \eqref{E:bdIII},  \eqref{E:bdIV},  
\eqref{E:bdV},  \eqref{E:bdVI},  \eqref{E:bdVII} 
is
$$|\partial_t \la \mathcal{L}w_1, w_1 \ra| \leq c(h^{4-4\delta'}+h^3)\|w_1\|_{H^1} + c(h^{4-4\delta'}+h)\|w_1\|_{H^1}^2$$
We see that it suffices to take $\delta'=\frac14$.  Integrating in time, we get
\begin{equation}
\label{eq:inte}
\la \mathcal{L}w_1(t),w_1(t)\ra \leq 
\begin{aligned}[t]
&\la \mathcal{L}w_1(t_1),w_1(t_1)\ra + c(t-t_1)h^3\|w_1\|_{L_{[t_1,t_2]}^\infty H^1} \\
&+ c(t-t_1)h\|w_1\|_{L_{[t_1,t_2]}^\infty H^1}^2
\end{aligned}
\end{equation}
By \eqref{E:gdef} and \eqref{eq:orth}, $w_1(t)$ satisfies the hypothesis of 
Proposition \ref{p:coer}, and we have
\begin{equation}
\label{E:specbd}
\frac{1}{c_2}\|w_1(t)\|_{H^1}^2 \leq \la \mathcal{L}w_1(t),w_1(t) \ra
\end{equation}
By direct estimation, we have the upper bound
$$|\la \mathcal{L}w_1(t_1),w_1(t_1)\ra| \leq 4\|w_1(t_1)\|_{H^1}^2$$
Combining this with \eqref{eq:inte} we get the bound
$$\|w_1(t)\|_{H^1}^2 \leq 
\begin{aligned}[t]
&4c_2\|w_1(t_1)\|_{H^1}^2 
+ c(t-t_1)h^3\|w_1\|_{L_{[t_1,t_2]}^\infty H^1} \\
&+ c(t-t_1)h\|w_1\|_{L_{[t_1,t_2]}^\infty H^1}^2
\end{aligned}$$
From this, we infer from the monotonicity of the right side that
$$\|w_1\|_{L_{[t_1,t_2]}^\infty H^1}^2 \leq 
\begin{aligned}[t]
&4c_2\|w_1(t_1)\|_{H^1}^2 
+ c(t_2-t_1)h^3\|w_1\|_{L_{[t_1,t_2]}^\infty H^1} \\
&+ c(t_2-t_1)h\|w_1\|_{L_{[t_1,t_2]}^\infty H^1}^2
\end{aligned}$$
Requiring that $t_2-t_1\leq {c}/{h}$ implies
\[ \begin{split} \|w_1\|_{L_{[t_1,t_2]}^\infty H^1}^2 
& \leq 8c_2\|w_1(t_1)\|_{H^1}^2+ch^2\|w_1\|_{L_{[t_1,t_2]}^\infty H^1}\\
& \leq 16c_2\|w_1(t_1)\|_{H^1}^2+ch^4 \,. \end{split} \]
Since $w=w_1+\tilde w$ and $\|\tilde w\|_{H^1} \leq ch^2$,
\begin{equation}
\label{E:w_estimate}
\|w\|_{L^\infty_{[t_1,t_2]} H^1} \leq 4\sqrt{c_2}\|w(t_1)\|_{H^1} + ch^2 \,, 
\end{equation}
which is the claimed estimate.
\end{proof}

\section{ODE analysis}
\label{ode}

To pass from an approximate equations for the parameters 
of the soliton, $ ( a, v, \gamma, \mu ) $ given in Lemma \ref{C:nomus}, 
to the ODEs \eqref{eq:t3} we need some 
elementary estimates which we present in this section. They are similar to 
those in \cite[\S 7]{HZ1}.

\begin{lem}
\label{L:ODEcompare}
Suppose that $0 < h \ll 1$, and $a=a(t)$, $v=v(t)$, 
$\epsilon_1=\epsilon_1(t)$, $\epsilon_2=\epsilon_2(t)$ are $C^1$ real-valued 
functions.  Suppose $f:\mathbb{R}\to\mathbb{R}$ is a $C^2$ mapping such that 
$|f|$ and $|f'|$ are uniformly bounded.  Suppose that on $[0,T]$, 
\begin{equation}
\label{E:ODE}
\left\{
\begin{aligned}
&\dot a = v + \epsilon_1\\
&\dot v = hf(ha) + \epsilon_2
\end{aligned}
\right., \qquad
\begin{aligned}
&a(0)=a_0\\
&v(0)=v_0
\end{aligned}
\end{equation}
Let $\bar a=\bar a(t)$ and $\bar v=\bar v(t)$ be the $C^1$ real-valued 
functions satisfying the exact equations
$$
\left\{
\begin{aligned}
&\dot{\bar a} = \bar v \\
&\dot{\bar v} = hf(h\bar a) 
\end{aligned}
\right., \qquad
\begin{aligned}
&\bar a(0)=a_0\\
&\bar v(0)=v_0
\end{aligned}
$$
with the same initial data.  Suppose that on $[0,T]$, we have 
$|\epsilon_j| \leq h^{4-\delta}$ for $j=1,2$.  Then provided 
$T\leq \delta h^{-1}\log (1/h)$, we have on $[0,T]$ the estimates
$$|a-\bar a|\leq h^{2-2\delta}\log(1/ h), 
\qquad |v-\bar v| \leq h^{3-2\delta}\log(1/h)$$
\end{lem}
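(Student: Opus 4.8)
The plan is to estimate the differences $A \defeq a - \bar a$ and $B \defeq v - \bar v$ by a Gr\"onwall argument, using the Lipschitz bound on $f$ to control the nonlinear term. Subtracting the two systems, I would write $\dot A = B + \epsilon_1$ and $\dot B = h\bigl(f(ha) - f(h\bar a)\bigr) + \epsilon_2$. The mean value theorem gives $|f(ha) - f(h\bar a)| \leq \|f'\|_{L^\infty} h |A|$, so that $|\dot B| \leq \|f'\|_{L^\infty} h^2 |A| + h^{4-\delta}$ and $|\dot A| \leq |B| + h^{4-\delta}$. Setting $\Phi(t) \defeq |A(t)| + |B(t)|/h$ (the weighting by $1/h$ is the key device: it balances the two equations so that the effective growth rate is $h$, not $1$), one obtains from $A(0) = B(0) = 0$ the differential inequality
\begin{equation*}
\dot \Phi(t) \leq C h\, \Phi(t) + C h^{3-\delta}\,,
\end{equation*}
where $C$ depends only on $\|f\|_{L^\infty}, \|f'\|_{L^\infty}$.

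Gr\"onwall's inequality then yields $\Phi(t) \leq C h^{3-\delta} \cdot \tfrac{1}{Ch}\bigl(e^{Cht} - 1\bigr) \leq h^{2-\delta}\, e^{Cht}$. On the time interval $T \leq \delta h^{-1}\log(1/h)$ we have $e^{ChT} \leq h^{-C\delta}$; absorbing the harmless power $h^{-C\delta}$ (or, more carefully, re-running the estimate noting that $\log(1/h)$ suffices as the amplification factor if one tracks constants — the statement only claims a $\log(1/h)$ loss, not $h^{-C\delta}$, so I would instead integrate directly: $\Phi(t) \leq C h^{3-\delta}\, t\, e^{Cht}$, and on the stated interval $Cht \leq C\delta \log(1/h)$ is bounded while $t \leq \delta h^{-1}\log(1/h)$, giving $\Phi(t) \leq C h^{2-\delta}\log(1/h)$ after renaming $\delta$). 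From $\Phi \leq h^{2-2\delta}\log(1/h)$ one reads off $|A| = |a - \bar a| \leq h^{2-2\delta}\log(1/h)$ and $|B| = |v - \bar v| \leq h\,\Phi \leq h^{3-2\delta}\log(1/h)$, which are exactly the claimed bounds.

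The only subtlety — and the step I expect to require the most care — is bookkeeping the exponent: one must verify that the Gr\"onwall amplification $e^{ChT}$ on the interval $T \leq \delta h^{-1}\log(1/h)$ contributes only a factor $h^{-C\delta}$ which, together with the input errors $h^{4-\delta}$ and the $1/h$ from integrating, produces precisely $h^{2-2\delta}\log(1/h)$ and not a worse power; this is why the hypothesis demands $|\epsilon_j| \leq h^{4-\delta}$ rather than merely $h^{3}$, and why the conclusion carries the extra $\log(1/h)$ and the doubled $\delta$. Everything else is the routine linear Gr\"onwall estimate, with the weighting $\Phi = |A| + |B|/h$ being the one nonobvious choice that makes the slow ($O(h)$) time scale manifest.
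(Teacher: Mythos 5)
Your proposal is correct and follows essentially the same route as the paper: a Gr\"onwall argument on the difference system in which the position and velocity errors are weighted by a relative factor of $h$ (your $\Phi = |a-\bar a| + |v-\bar v|/h$ is exactly the paper's time-rescaled variables $x=hb(t/h)$, $y=w(t/h)$), yielding the slow growth rate $e^{O(h)t}$ and the factor $T\sup|\epsilon_j|/h$ that produces $h^{2-2\delta}\log(1/h)$ and $h^{3-2\delta}\log(1/h)$. The paper merely packages the inhomogeneity through Duhamel's formula and a propagator bound rather than integrating it inside the Gr\"onwall inequality, and your remaining bookkeeping issue (the Lipschitz constant of $f$ appearing in the exponent, handled by rescaling $\delta$ or the constant $C$ in the time interval) is shared by, and resolved the same way as in, the paper.
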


Before proceeding to the proof, we recall some basic tools.

\noindent\textit{Gronwall estimate}.  Suppose $b=b(t)$ and $w=w(t)$ are $C^1$ 
real-valued functions, $h$ is a constant, and $(b,w)$ satisfy the differential 
inequality:
\begin{equation}
\label{E:diff_ineq}
\left\{
\begin{aligned}
&|\dot b| \leq |w| \\
&|\dot w| \leq h^2 |b|
\end{aligned}
\right. ,
\qquad
\begin{aligned}
&b(0)=b_0\\
&w(0)=w_0
\end{aligned}
\end{equation}
Let $x(t)= hb(t/h)$, $y(t)=w(t/h)$.  Then 
$$
\left\{
\begin{aligned}
&|\dot x| \leq |y| \\
&|\dot y| \leq |x|
\end{aligned}
\right. ,
\qquad
\begin{aligned}
&x(0)=x_0= h b_0\\
&y(0)=y_0=w_0
\end{aligned}
$$
Let $z(t)=x^2+y^2$.  Then $|\dot z| = |2x\dot x + 2y\dot y| \leq 2|x||y| 
+ 2|x||y| \leq 2(x^2+y^2) = 2z$, and hence $z(t) \leq z(0)e^{2t}$.
Thus
$$
\begin{aligned}
&|x(t)| \leq \sqrt 2\max(|x_0|,|y_0|) \exp(t)\\
&|y(t)| \leq \sqrt 2\max(|x_0|,|y_0|) \exp(t)
\end{aligned}
$$
Converting from $(x,y)$ back to $(b,w)$, we obtain the Gronwall estimate
\begin{equation}
\label{E:Gron}
\begin{aligned}
&|b(t)| \leq \sqrt 2\max(h|b_0|,|w_0|)
\frac{\exp(ht)}{h}\\
&|w(t)| \leq \sqrt 2\max(h|b_0|,|w_0|)\exp(ht)
\end{aligned}
\end{equation}

\noindent\textit{Duhamel's formula}.
For a two-vector function $X(t): \mathbb{R} \to \mathbb{R}^2$, a two-vector 
$X_0\in \mathbb{R}^2$, and a $2\times 2$ matrix function 
$A(t):\mathbb{R}\to (2\times 2\text{ matrices})$, let $X(t)=S(t,t')X_0$ denote 
the solution to the ODE system $\dot X(t) = A(t)X(t)$ with $X(t')=X_0$:
\[ \frac{d}{dt} S(t,t')X_0 = A(t)S(t,t')X_0 \,, \ \ S(t',t')X_0=X_0 \, . \]
Then, for a given two-vector function $f(t):\mathbb{R}\to \mathbb{R}^2$, the 
solution to the inhomogeneous ODE system  
\begin{equation}
\label{E:inhomODE}
\dot X(t) = A(t)X(t) + F(t)
\end{equation}
with initial condition $X(0)=0$ is given by Duhamel's formula
\begin{equation}
\label{E:Duhamel}
X(t) = \int_0^t S(t,t')F(t')dt'
\end{equation}

\begin{proof}[Proof of Lemma \ref{L:ODEcompare}]
Let $\tilde a= a-\bar a$ and $\tilde v = v-\bar v$; these perturbative 
functions satisfy
$$
\left\{
\begin{aligned}
&\dot{\tilde a} = \tilde v + \epsilon_1\\
&\dot{\tilde v} =  h^2 g \tilde a + \epsilon_2
\end{aligned}
\right., \qquad
\begin{aligned}
&\tilde a(0)=0\\
&\tilde v(0)=0
\end{aligned}
$$
where $g=g(t)$ is given by
$$g=\left\{
\begin{aligned}
& \frac{f( ha) -f(h \bar a)}{h(a-\bar a)} & \ \ \text{if }\bar a \neq a\\
& \ \ f'(ha) &\ \ \text{if }a=\bar a
\end{aligned}
\right.
$$
which is $C^1$ (in particular, uniformly bounded).  Set 
$$A(t) = \begin{bmatrix} 0 & 1 \\ h^2g(t) & 0 \end{bmatrix}, 
\quad F(t) = \begin{bmatrix} \epsilon_1(t) \\ \epsilon_2(t) \end{bmatrix}, 
\quad X(t)=\begin{bmatrix} \tilde a(t) \\ \tilde v(t) \end{bmatrix}$$
in \eqref{E:inhomODE}, and appeal to Duhamel's formula \eqref{E:Duhamel} to 
obtain
\begin{equation}
\label{E:Duhamel2}
\begin{bmatrix}
\tilde a(t) \\ \tilde v(t) 
\end{bmatrix}
= \int_0^t S(t,t') \begin{bmatrix} \epsilon_1(t') \\ \epsilon_2(t') 
\end{bmatrix} \, dt'
\end{equation}
Apply the Gronwall estimate \eqref{E:Gron} with
$$\begin{bmatrix} b(t) \\ w(t) \end{bmatrix} = S(t+t',t')\begin{bmatrix} 
\epsilon_1(t') \\ \epsilon_2(t') \end{bmatrix},  \quad \begin{bmatrix} b_0 
\\ w_0 \end{bmatrix} = \begin{bmatrix} \epsilon_1(t') \\ 
\epsilon_2(t') \end{bmatrix} $$
to conclude that 
$$\left| S(t,t') \begin{bmatrix} \epsilon_1(t') \\ 
\epsilon_2(t') \end{bmatrix} \right| \leq \sqrt 2 \begin{bmatrix} 
h^{-1}\exp(h(t-t')) \\ \exp(h(t-t')) \end{bmatrix} 
\max(h|\epsilon_1(t')|,|\epsilon_2(t')|)$$
Feed this into \eqref{E:Duhamel2} to obtain that on $[0,T]$
\begin{align*}
&|\tilde a(t)| \leq \sqrt 2 \, T\frac{\exp(hT)}{h} 
\sup_{0\leq s\leq T}\max(h|\epsilon_1(s)|,|\epsilon_2(s)|)\\
&|\tilde v(t)| \leq \sqrt 2 \, T\exp( h T) 
\sup_{0\leq s\leq T}\max(h |\epsilon_1(s)|,|\epsilon_2(s)|)
\end{align*}
Taking $T\leq \delta h^{-1}\log(1/h)$, we obtain the claimed bounds.
\end{proof}

\section{Proof of Theorem \ref{t:1}}
\label{prt1}

We can now put all the components of the proof together.
Lemma \ref{C:nomus} and Theorem \ref{t:2} show that 
on the time interval $ 0 < t < c \delta \log ( 1/h ) / h $
we have \eqref{eq:t1} with the parameters satisfying
\begin{gather*}
\dot {  a} =   v + {\mathcal O} (h^{4( 1 - \delta) } )\,, \ \ 
\dot{  v} = - \sech^2 * V'  (  a) /2 + {\mathcal O} (h^{4( 1 - \delta) } )
\,,  \ \ \dot \mu = {\mathcal O} (h^{4( 1 - \delta) } ) \,, 
\\ 
\dot {  \gamma} = 
1/2  + 
{v^2}/2 
-  \sech^2 * V (   a ) +  ( x \, \sech^2 x \tanh x ) * V  (  a) 
+ {\mathcal O} (h^{4( 1 - \delta) } ) \,.
\end{gather*}
Lemma \ref{L:ODEcompare} can be applied to replace 
$ a $ and $ v $ with solutions of \eqref{eq:t3} and the direct 
integration of the error terms shows that the same is true for 
$ \mu $ and $ \gamma $. In particular we can drop $ \mu $ altogether. 
\stopthm

We conclude the paper with some remarks. The proof above and Theorem
\ref{t:2} show that the conclusions of Theorem \ref{t:1} remain 
unchanged if instead of taking $ e^{i x v_0} \sech ( x - x_0 ) $ as
initial condition, we took 
\[ e^{i x v_0} \sech ( x - x_0 ) + r ( x ) \,, \ \ \ \| r \|_{H^1} \leq C 
h^{ 2 - \delta } \,.\]
We could go down to $ \| r \|_{H^1} \leq C h^{ 3/2 + 3 \delta } $ at
the expense of complicating the final statement to \eqref{E:wbd2}.
A more general condition on the initial value would make the 
bootstrap argument in \S \ref{pr} so unwieldy that we opted out 
of pursuing that technical issue. 

In higher dimensions similar methods are clearly applicable for 
weaker nonlinearities and under further spectral assumptions -- 
see \cite{FrSi} for examples. At this early stage we restrict
ourselves to the physically relevant cubic nonlinearity which 
at the moment is tracktable only in dimension one.

\end{document}